\newcommand{\tpmod}[1]{{\@displayfalse\pmod{#1}}}
\newcommand{\ord}{\operatorname{ord}}
\newtheorem{thm}{Theorem}[section]
\newtheorem{lemma}[thm]{Lemma}
\newtheorem{prop}[thm]{Proposition}
\newtheorem{cor}[thm]{Corollary}
\theoremstyle{remark}
\theoremstyle{definition}
    \newtheorem{defn}[thm]{Definition}
\newtheorem{rem}[thm]{Remark}
\newcommand{\abs}[1]{\left|{#1}\right|}
\def\FF {{\mathcal F}}
\def\Z {{\mathbb Z}}
\def\NN {{\mathcal N}}
\def\Q {{\mathbb Q}}
\def\C {{\mathcal C}}
\def\D {{\mathcal D}}
\def\F {{\mathbb F}}
\def\D {{\mathcal D}}
\def\Z {{\mathbb Z}}
\def\Q {{\mathbb Q}}
\def\C {{\mathbb C}}
\def\CC {{\mathcal C}}
\def\red#1 {\textcolor{red}{#1 }}
\def\blue#1 {\textcolor{blue}{#1 }}
\numberwithin{equation}{section}
\def\Z {{\mathbb Z}}
\begin{document}

\title[Trinomials and $k$-Wall-Sun-Sun Primes]{A Connection Between the Monogenicity of Certain Power-Compositional Trinomials and $k$-Wall-Sun-Sun Primes}

%\author{Joshua Harrington}
%\address{Department of Mathematics, Cedar Crest College, Allentown, Pennsylvania, USA}
%\email[Joshua Harrington]{Joshua.Harrington@cedarcrest.edu}

\author{Lenny Jones}
\address{Professor Emeritus, Department of Mathematics, Shippensburg University, Shippensburg, Pennsylvania 17257, USA}
\email[Lenny~Jones]{doctorlennyjones@gmail.com}

%\author{Daniel White}
%\address{Department of Mathematics, Bryn Mawr College, Bryn Mawr, Pennsylvania 19010-2899, USA}
%\email[Daniel~White]{dfwhite@brynmawr.edu}
\date{\today}

\begin{abstract}
We say that a monic polynomial $f(x)\in {\mathbb Z}[x]$ of degree $N$ is \emph{monogenic} if $f(x)$ is irreducible over ${\mathbb Q}$ and
\[\{1,\theta,\theta^2,\ldots, \theta^{N-1}\}\]
is a basis for the ring of integers of ${\mathbb Q}(\theta)$, where $f(\theta)=0$.

Let $k$ be a positive integer, and let $U_n:=U_n(k,-1)$ be the Lucas sequence $\{U_n\}_{n\ge 0}$ of the first kind defined by
\[U_0=0,\quad U_1=1\quad \mbox{and} \quad U_n=kU_{n-1}+U_{n-2} \quad \mbox{ for $n\ge 2$}.\] A \emph{$k$-Wall-Sun-Sun prime} is a prime $p$ such that
\[U_{\pi_k(p)}\equiv 0 \pmod{p^2},\]
where $\pi_k(p)$ is the length of the period of $\{U_n\}_{n\ge 0}$ modulo $p$.

 Let ${\mathcal D}=k^2+4$ if $k\equiv 1 \pmod{2}$, and ${\mathcal D}=(k/2)^2+1$ if $k\equiv 0 \pmod{2}$. Suppose that $k\not \equiv 0 \pmod{4}$ and ${\mathcal D}$ is squarefree, and let $h$ denote the class number of ${\mathbb Q}(\sqrt{{\mathcal D}})$. Let $s\ge 1$ be an integer such that, for every odd prime divisor $p$ of $s$,  ${\mathcal D}$ is not a square modulo $p$ and  $\gcd(p,h{\mathcal D})=1$. In this article, we prove that
  $x^{2s^n}-kx^{s^n}-1$ is monogenic for all integers $n\ge 1$ if and only if no prime divisor of $s$ is a $k$-Wall-Sun-Sun prime.
\end{abstract}

\subjclass[2020]{Primary 11R04, 11B39, Secondary 11R09, 12F05}
\keywords{$k$-Wall-Sun-Sun prime, monogenic, power-compositional}

\maketitle
\section{Introduction}\label{Section:Intro}

For a positive integer $k$, we let $U_n:=U_{n}(k,-1)$ denote the $n$th term of the Lucas sequence $\{U_n\}_{n\ge 0}$ of the first kind defined by
\begin{equation}\label{Eq:Lucas}
U_0=0,\quad U_1=1\quad \mbox{and} \quad U_n=kU_{n-1}+U_{n-2} \quad \mbox{ for $n\ge 2$}.
\end{equation}
The sequence $\{U_n\}_{n\ge 0}$ is periodic modulo any prime $p$, and we let $\pi_k(p)$ denote the length of the period of $\{U_n\}_{n\ge 0}$ modulo $p$.

 A \emph{$k$-Wall-Sun-Sun prime} \cite{Wiki2} is a prime $p$ such that
 \begin{equation}\label{Def:kWSS}
 U_{\pi_k(p)}\equiv 0 \pmod{p^2}.
  \end{equation} When $k=1$, the sequence $\{U_n\}_{n\ge 0}$ is the well-known Fibonacci sequence, and the $k$-Wall-Sun-Sun primes in this case are also known as \emph{Fibonacci-Wieferich} primes \cite{Wiki1}, or simply \emph{Wall-Sun-Sun} primes \cite{CDP,Wiki2}. However, at the time this article was written, no such primes were known to exist. The existence of Wall-Sun-Sun primes was first investigated by D. D. Wall \cite{Wall} in 1960, and subsequently studied by the Sun brothers \cite{SunSun}, who showed a connection with Fermat's Last Theorem.

Throughout this article,  we let $\Delta(f)$ and $\Delta(K)$ denote, respectively, the discriminants over $\Q$ of $f(x)\in \Z[x]$ and a number field $K$. We define $f(x)\in \Z[x]$ to be \emph{monogenic} if $f(x)$ is monic, irreducible over $\Q$ and
  $\{1,\theta,\theta^2,\ldots ,\theta^{\deg(f)-1}\}$ is a basis for the ring of integers $\Z_K$ of $K=\Q(\theta)$, where $f(\theta)=0$.
 If $f(x)$ is irreducible over $\Q$ with $f(\theta)=0$,  then \cite{Cohen}
\begin{equation} \label{Eq:Dis-Dis}
\Delta(f)=\left[\Z_K:\Z[\theta]\right]^2\Delta(K).
\end{equation}
Observe then, from \eqref{Eq:Dis-Dis}, that $f(x)$ is monogenic if and only if $\Delta(f)=\Delta(K)$.
Thus, if $\Delta(f)$ is squarefree, then $f(x)$ is monogenic from \eqref{Eq:Dis-Dis}. However, the converse does not hold in general, and when $\Delta(f)$ is not squarefree, it can be quite difficult to determine whether $f(x)$ is monogenic.

Throughout this article, we also let $\{U_n\}_{n\ge 0}$ be the sequence as defined in \eqref{Eq:Lucas}, and let $\D$ be defined as follows:
\begin{defn}\label{Def:D}
  Let $k\ge 1$ be an integer, with $k\ne 4$. Define
 \[\D:=\left\{\begin{array}{cl}
  k^2+4 & \mbox{if $k\equiv 1 \pmod{2}$}\\
   (k/2)^2+1 & \mbox{if $k\equiv 0 \pmod{2}$.}
 \end{array} \right.\]
 \end{defn}
 \noindent
We let $\delta$ denote the Legendre symbol $\left(\frac{\D}{p}\right)$, where $p$ is a prime determined by the context.

In this article, we establish a connection between the monogenicity\footnote{Although the terms \emph{monogenity} and \emph{monogeneity} are more common in the literature, we have decided to use the more grammatically-correct term \emph{monogenicity}.} of certain power-compositional trinomials and $k$-Wall-Sun-Sun primes. More precisely, we   prove
\begin{thm}\label{Thm:Main}
Let $f(x)=x^2-kx-1\in \Z[x]$, such that $k\ge 1$, $k\not \equiv 0 \pmod{4}$ and $\D$ is squarefree. Let $s\ge 1$ be an integer such that $\delta=-1$ and  $\gcd(p,h\D)=1$ for every prime divisor $p\ge 3$ of $s$, where $h$ is the class number of $\Q(\sqrt{\D})$.
 Then $f(x^{s^n})$ is monogenic for all integers $n\ge 1$ if and only if no prime divisor of $s$ is a $k$-Wall-Sun-Sun prime.
\end{thm}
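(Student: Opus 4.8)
The plan is to reduce the monogenicity of $g_n(x):=f(x^{s^n})=x^{2s^n}-kx^{s^n}-1$ to a collection of local conditions, one for each prime dividing $\Delta(g_n)$, and then to identify the single ``bad'' prime where the $k$-Wall-Sun-Sun condition intervenes. First I would compute $\Delta(g_n)$ explicitly. Since $f(x)=x^2-kx-1$ has $\Delta(f)=k^2+4$, a standard formula for the discriminant of $f(x^m)$ (or a resultant computation $\mathrm{Res}(g_n,g_n')$) gives $\Delta(g_n)=\pm\, (2s^n)^{2s^n}\, (k^2+4)^{?}\cdots$ up to an explicit power; the upshot is that the primes dividing $\Delta(g_n)$ are exactly $2$, the primes dividing $s$, and the primes dividing $k^2+4$. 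Because $\D$ is squarefree and $k\not\equiv 0\pmod 4$, one first checks that $f$ itself is monogenic (its discriminant is $k^2+4=\D$ when $k$ is odd, and $4((k/2)^2+1)=4\D$ when $k$ is even, and in the even case the prime $2$ needs a separate Dedekind check, which is where $k\not\equiv 0\pmod 4$ is used). Irreducibility of $g_n$ over $\Q$ should follow from the fact that $f$ is irreducible with roots that are units times non-$p$-th-powers for each $p\mid s$ — a Capelli-type / Vahlen–Capelli argument using the hypothesis $\delta=-1$ to guarantee $p$ remains inert-like with respect to the relevant root.

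Next I would run the Dedekind criterion prime by prime on $g_n$, with $\theta$ a root and $K_n=\Q(\theta)$. For primes $p\mid k^2+4$ with $p\nmid 2s$: the ramification comes entirely from $f$, and since $f$ is monogenic these primes contribute no index, essentially because $\theta^{s^n}$ generates the same ramified behavior as the root of $f$. For $p=2$ (when $k$ is even): here the hypotheses $k\not\equiv 0\pmod 4$ and $\gcd(s,2\D)$-type conditions are tailored so that $2\nmid s$ (since $2\mid s$ would force $\D$ to be a square mod $2$ trivially — actually the condition $\delta=-1$ at $p=2$ would need care, so presumably $s$ is odd), and the $2$-adic Dedekind check for $g_n$ reduces to that for $f$. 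The crux is the odd primes $p\mid s$ with $\delta=-1$ and $\gcd(p,h\D)=1$: I would use the factorization of $p$ in $\Z_K$ where $K=\Q(\sqrt\D)=\Q(\theta_f)$ (with $\theta_f$ a root of $f$), show $p$ is inert in $K$ because $\delta=-1$, and then analyze the tower $\Q\subset K\subset K_n$ obtained by adjoining an $s^n$-th root. The condition $\gcd(p,h)=1$ ensures the relevant ideal becomes principal so that a suitable generator can be extracted as the needed root without introducing extra ramification, and the Dedekind criterion at $p$ for $g_n$ then becomes equivalent to $\theta_f$ (equivalently $\pm 1/\theta_f$, a fundamental-unit-like quantity) not being a $p$-th power modulo $p^2$ in $\Z_K/p^2$ — and this non-$p$-th-power condition is precisely a reformulation of $p$ not being a $k$-Wall-Sun-Sun prime.

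The bridge between the two languages is the identity tying $U_n$ to powers of the root of $x^2-kx-1$: if $\alpha,\beta$ are the roots of $f$, then $U_n=(\alpha^n-\beta^n)/(\alpha-\beta)$, and the $k$-Wall-Sun-Sun condition $U_{\pi_k(p)}\equiv 0\pmod{p^2}$ translates, via $\alpha^{\pi_k(p)}\equiv\beta^{\pi_k(p)}\pmod{p^2}$ in $\Z_K$, into the statement that the image of $\alpha$ in $(\Z_K/p^2)^\times$ has order dividing the order of its reduction mod $p$ — i.e. that $\alpha$ ``is not a primitive root to the power $p^2$'' in the precise sense needed. I would make this precise by showing: $p$ is not a $k$-Wall-Sun-Sun prime $\iff$ $\alpha$ is not a $p$-th power in $(\Z_K/p^2)^\times$ (here $\delta=-1$ makes $\Z_K/p\cong\F_{p^2}$, whose unit group is cyclic of order $p^2-1$, and $\Z_K/p^2$ has unit group of order $p(p^2-1)$, so the $p$-th-power obstruction lives in the order-$p$ part) $\iff$ the Dedekind criterion for $g_n$ is satisfied at $p$ for every $n\ge 1$. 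The main obstacle I anticipate is exactly this last equivalence: carefully setting up the Dedekind/Ore computation for the power-compositional trinomial $g_n$ at the primes $p\mid s$, handling all $n\ge 1$ uniformly (one must check that if the criterion holds for $n=1$ it holds for all $n$, which should follow because $g_{n}(x)=g_1(x^{s^{n-1}})$ and the $p$-adic structure stabilizes), and showing that the failure of the criterion for some $n$ is equivalent to the mod-$p^2$ $p$-th-power condition rather than just the mod-$p$ one. The class-number coprimality hypothesis $\gcd(p,h)=1$ will be doing essential work here to keep the extension $K_n/K$ from acquiring spurious ramification at $p$, and verifying that is the delicate technical heart of the argument.
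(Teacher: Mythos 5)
Your overall architecture---Capelli for irreducibility, Dedekind's criterion prime by prime, and a translation of the index condition at primes $p\mid s$ into a mod-$p^2$ power condition on the root $\varepsilon$ of $f$, bridged to the $U_n$ via the Binet formula---matches the paper's proof in outline. Your accounting of the discriminant is also essentially right: $\Delta(f(x^{s^n}))=\pm s^{2ns^n}(k^2+4)^{s^n}$, so only primes dividing $s$ or $k^2+4$ are in play, and the latter are harmless because $\D$ is squarefree. (One small correction: $s$ need not be odd; the hypotheses constrain only the prime divisors $p\ge 3$ of $s$, and $p=2$ is disposed of by the fact that $2$ is a $k$-Wall-Sun-Sun prime precisely when $k\equiv 0\pmod 4$, which is excluded.)

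There is, however, a genuine gap exactly where you locate ``the delicate technical heart,'' and the mechanism you propose there would not close it. You assert that $\gcd(p,h)=1$ serves to keep $K_n/K$ from acquiring spurious ramification at $p$ by making a relevant ideal principal; that is not how the hypothesis functions, and no ideal-theoretic argument of that shape is in sight. What is actually needed is the chain of equivalences, for $p\ge 3$ inert in $\Q(\sqrt{\D})$ with $p\nmid h\D$: $p$ is a $k$-Wall-Sun-Sun prime $\iff \varepsilon^{p^2-1}\equiv 1\pmod{p^2} \iff \pi_k(p)=\pi_k(p^2)$. In the paper this is imported from Bouazzaoui's theorem on $p$-rational real quadratic fields (Theorem \ref{Thm:B}), which is precisely where $p\nmid h\D$ is consumed, and is then combined with the elementary Lemma \ref{Lem:NewEq} (using $\varepsilon^{p+1}\equiv -1\pmod{p}$ and $\overline{\varepsilon}=-1/\varepsilon$) to replace $U_{\pi_k(p)}$ by $U_{p+1}$. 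Your reformulation ``$\varepsilon$ is a $p$-th power in $(\Z_K/p^2\Z_K)^\times$'' is indeed equivalent to $\varepsilon^{p^2-1}\equiv 1\pmod{p^2}$ (note the unit group has order $p^2(p^2-1)$, not $p(p^2-1)$), but without the Bouazzaoui/Greenberg input, or an explicit substitute for it, the link to the $k$-Wall-Sun-Sun condition is unproven. Separately, ``the $p$-adic structure stabilizes'' for all $n$ is not automatic: the paper secures uniformity by (i) an induction using $\Delta(K_n)^{[K_{n+1}:K_n]}\mid\Delta(K_{n+1})$, which bounds $[\Z_{K_{n+1}}:\Z[\varepsilon_{n+1}]]^2$ by $\Delta(\FF_{n+1})/\Delta(\FF_n)^s=\pm s^{2s^{n+1}}$ so that only $p\mid s$ need be tested at each layer, and (ii) the observation that $\ord_{p^2}(\varepsilon)=\pi_k(p^2)$ divides $2p(p+1)$ while $p^m\equiv p$ or $p^2\pmod{2p(p+1)}$, which collapses $\varepsilon^{p^m}\bmod p^2$ to the two cases $m\in\{1,2\}$. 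Both steps would need to be supplied in your write-up.
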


Theorem \ref{Thm:Main} is motivated in part by the recent result \cite[Theorem 4.5]{LJEisenstein} that if $f(x)=x^2+ax+a$ with $a\in \{2,3\}$, then  $f(x^{s^n})$ is monogenic for all integers $n\ge 0$ if and only if $s\ge 2$ has no prime divisors $p$ with the property that $a^{p-1}-1\equiv 0 \pmod{p^2}$ for some integer $a>1$. If $a^{p-1}-1\equiv 0 \pmod{p^2}$ for some integer $a>1$, then $p$ is called a \emph{base-$a$ Wieferich prime} \cite{Wiki1}.

A second motivation for this article arises from recent results of Bouazzaoui \cite{Bouazzaoui1,Bouazzaoui2}. Let $p\ge 3$ be a rational prime. A number field $K$ is said to be \emph{$p$-rational} if the Galois group
of the maximal pro-$p$-extension of $K$ which is unramified outside $p$ is a free pro-$p$-group of rank
$r_2 + 1$, where $r_2$ is the number of pairs of complex embeddings of $K$. Let $d>0$ be a fundamental discriminant \cite{Wiki0}, and let $h_d$ be the class number of the real quadratic field $K=\Q(\sqrt{d})$. For any unit $u\in K$ with $u\not \in \{\pm1\}$, Bouazzaoui defines a rational prime $p\ge 3$ to be \emph{Wieferich of basis $u$} if
\[u^{p^r-1}-1\equiv 0 \pmod{p^2},\]
where $r$ is the residual degree of $p$ in $K$. Let $\varepsilon$ be the fundamental unit of $K$, and let $\left(\frac{d}{p}\right)$ be the Legendre symbol. %, and let $\delta$ be the Legendre symbol $\left(\frac{d}{p}\right)$.
Bouazzaoui proves
\begin{thm}\label{Thm:B}
Suppose that $p\ge 3$ is a prime such that $p\nmid (\varepsilon-\overline{\varepsilon})^2h_d$. Then
\begin{align}\label{B}
\begin{split}
\mbox{$K$ is not $p$-rational} & \quad \Longleftrightarrow \quad \mbox{$p$ is Wieferich of basis $\varepsilon$}\\
 & \quad \Longleftrightarrow \quad \pi(p)=\pi(p^2),\\
 & \quad \Longleftrightarrow \quad F_{p-\left(\frac{d}{p}\right)} \equiv 0 \pmod{p^2},
 \end{split}
 \end{align}
 where $\{F_n\}_{n\ge 0}$ is the Lucas sequence of the first kind defined by
\[F_0=0,\quad F_1=1\quad \mbox{and} \quad F_n=(\varepsilon+\overline{\varepsilon})U_{n-1}-\NN_{K/\Q}(\varepsilon)U_{n-2} \quad \mbox{ for $n\ge 2$},\] with $\pi(p)$ and $\pi(p^2)$ the respective period lengths of $\{F_n\}_{n\ge 0}$ modulo $p$ and $p^2$.
  \end{thm}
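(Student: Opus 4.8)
The plan is to treat the Wieferich condition $\varepsilon^{p^r-1}\equiv 1\pmod{p^2}$ as the central statement and to prove separately that each of the other three conditions is equivalent to it. Throughout I would work inside $\Z_K$ and interpret $\{F_n\}$ as the Lucas sequence of the first kind attached to $x^2-(\varepsilon+\overline{\varepsilon})x+\NN_{K/\Q}(\varepsilon)$, whose roots are $\varepsilon,\overline{\varepsilon}$, so that the Binet formula $F_n=(\varepsilon^n-\overline{\varepsilon}^{\,n})/(\varepsilon-\overline{\varepsilon})$ holds and the discriminant is $(\varepsilon-\overline{\varepsilon})^2$. The hypothesis $p\nmid(\varepsilon-\overline{\varepsilon})^2$ makes $\varepsilon-\overline{\varepsilon}$ a unit modulo every prime $\p$ of $K$ above $p$, so that $F_n\equiv 0\pmod{p^2}$ is equivalent to $\varepsilon^n\equiv\overline{\varepsilon}^{\,n}\pmod{p^2}$. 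The residual degree is $r=1$ when $\left(\tfrac{d}{p}\right)=1$ ($p$ splits) and $r=2$ when $\left(\tfrac{d}{p}\right)=-1$ ($p$ is inert); in either case $n:=p-\left(\tfrac{d}{p}\right)$ is even, so $\NN_{K/\Q}(\varepsilon)^{\,n}=1$ and hence $\overline{\varepsilon}^{\,n}=\varepsilon^{-n}$. Thus $F_{p-\left(\tfrac{d}{p}\right)}\equiv 0\pmod{p^2}$ is equivalent to $\varepsilon^{2n}\equiv 1\pmod{p^2}$, and the whole problem is reduced to comparing multiplicative orders in $(\Z_K/p^2)^\times$.

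For the equivalence with the Fibonacci-type congruence I would argue by cases. In the split case Fermat's theorem in each residue field $\F_p$ gives $\varepsilon^{p-1}\equiv 1\pmod{p}$, so writing $\varepsilon^{p-1}=1+p\alpha$ and squaring yields $\varepsilon^{2(p-1)}\equiv 1\pmod{p^2}$ if and only if $\alpha\equiv 0\pmod{p}$, i.e. if and only if $\varepsilon^{p-1}\equiv 1\pmod{p^2}$; combined with the Binet reduction this is exactly the Wieferich condition. In the inert case Frobenius equals Galois conjugation modulo $\p=(p)$, so $\varepsilon^{p+1}\equiv\varepsilon\cdot\varepsilon^{p}\equiv\NN_{K/\Q}(\varepsilon)=\pm1\pmod{\p}$; writing $\varepsilon^{p+1}=\pm1+p\gamma$ and squaring shows $\varepsilon^{2(p+1)}\equiv 1\pmod{p^2}$ iff $\varepsilon^{p+1}\equiv\pm1\pmod{p^2}$. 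Finally, using the decomposition $(\Z_K/p^2)^\times\cong\F_{p^2}^\times\times(1+p\Z_K/p^2)$, where the second factor is elementary abelian of exponent $p$, the condition $\varepsilon^{p^2-1}\equiv 1\pmod{p^2}$ forces the principal-unit component of $\varepsilon$ to be trivial; a Teichm\"uller-lift argument then identifies this with $\varepsilon^{p+1}\equiv\pm1\pmod{p^2}$. This completes ``Wieferich $\Longleftrightarrow F_{p-\left(\tfrac{d}{p}\right)}\equiv 0\pmod{p^2}$'' in both cases.

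The equivalence with $\pi(p)=\pi(p^2)$ rests on the same order computation, now phrased through the companion matrix $M=\begin{pmatrix}\varepsilon+\overline{\varepsilon}&-\NN_{K/\Q}(\varepsilon)\\ 1&0\end{pmatrix}$, whose order in $\mathrm{GL}_2(\Z/p^m)$ equals $\pi(p^m)$. Since $p\nmid(\varepsilon-\overline{\varepsilon})^2$, $M$ is diagonalizable modulo $p^m$ with eigenvalues $\varepsilon,\overline{\varepsilon}$, and because conjugation is a ring automorphism the $p$-part of $\pi(p^m)$ coincides with the $p$-part of $\ord(\varepsilon)$ in $(\Z_K/p^m)^\times$. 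As $(\Z_K/p)^\times$ has order prime to $p$, the period $\pi(p)$ is prime to $p$, while the standard lifting dichotomy gives $\pi(p^2)\in\{\pi(p),\,p\,\pi(p)\}$. Hence $\pi(p^2)=\pi(p)$ holds precisely when $\ord(\varepsilon)$ does not grow from $p$ to $p^2$, i.e. when $\ord(\varepsilon)$ modulo $p^2$ divides $p^r-1$, which is again the Wieferich condition $\varepsilon^{p^r-1}\equiv 1\pmod{p^2}$.

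The last and by far the hardest equivalence, ``$K$ is not $p$-rational $\Longleftrightarrow$ $p$ is Wieferich of basis $\varepsilon$,'' is where genuine arithmetic is required, and I would invoke the general theory of $p$-rationality for abelian fields. Because $K$ is real quadratic we have $r_2=0$ and unit rank $1$, so (Leopoldt holding for abelian fields) the torsion module $\T_K$ measuring failure of $p$-rationality is finite, and imposing $p\nmid h_d$ removes its class-group contribution, leaving $\#\T_K$ controlled by the normalized $p$-adic regulator of $K$. Concretely, $K$ is $p$-rational iff $\tfrac{1}{p}\log_p\!\big(\varepsilon^{p^r-1}\big)$ is a $p$-adic unit. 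Since $\varepsilon^{p^r-1}\equiv 1\pmod{p}$ always and, $p$ being unramified by $p\nmid(\varepsilon-\overline{\varepsilon})^2$, the $p$-adic logarithm satisfies $v_{\p}\!\big(\log_p(u)\big)=v_{\p}(u-1)$ for principal units $u$, this unit condition is equivalent to $\varepsilon^{p^r-1}\not\equiv 1\pmod{p^2}$. Negating both sides gives the claimed equivalence. I expect this $p$-rationality step to be the main obstacle: the first two equivalences are elementary consequences of the Binet formula and the structure of $(\Z_K/p^2)^\times$, whereas this one needs class field theory and the regulator interpretation of $p$-rationality, together with careful bookkeeping of the local contributions that the hypotheses $p\nmid(\varepsilon-\overline{\varepsilon})^2 h_d$ are designed to control.
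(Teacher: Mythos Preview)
The paper does not contain a proof of Theorem~\ref{Thm:B}. It is stated there as a result of Bouazzaoui (references \cite{Bouazzaoui1,Bouazzaoui2}) and is used only as a black box: the paper invokes it, together with Lemma~3.4, to obtain Lemma~3.5 (the equivalent characterizations of $k$-Wall--Sun--Sun primes), and never revisits its proof. So there is no ``paper's own proof'' to compare your attempt against.

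That said, your outline is a reasonable reconstruction of how such a theorem is proved, and it is in the spirit of Bouazzaoui's arguments. The Binet/companion-matrix reductions you give for the two ``elementary'' equivalences (Wieferich $\Leftrightarrow$ $\pi(p)=\pi(p^2)$ and Wieferich $\Leftrightarrow$ $F_{p-\left(\frac{d}{p}\right)}\equiv 0\pmod{p^2}$) are essentially the standard ones; note that the paper itself carries out a closely related computation in Lemmas~3.3 and~3.4 for the special family $x^2-kx-1$, using exactly the same ingredients (Binet formula, Frobenius acting as conjugation in the inert case, and the dichotomy $\pi_k(p^2)\in\{\pi_k(p),\,p\,\pi_k(p)\}$). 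Your identification of the $p$-rationality equivalence as the genuinely arithmetic step, requiring the $p$-adic regulator criterion and the hypothesis $p\nmid h_d$ to kill the class-group contribution to the torsion module, is also correct and is where Bouazzaoui's papers do the real work. One small point to tighten in your inert-case argument: when you pass from $\varepsilon^{p+1}\equiv\pm1\pmod{p^2}$ to $\varepsilon^{p^2-1}\equiv 1\pmod{p^2}$, you should make explicit that the sign is forced to be $\NN_{K/\Q}(\varepsilon)$ (since $\varepsilon^{p+1}\equiv\NN_{K/\Q}(\varepsilon)\pmod{p}$ already), so that $(\varepsilon^{p+1})^{p-1}\equiv\NN_{K/\Q}(\varepsilon)^{p-1}=1\pmod{p^2}$ follows directly rather than via the Teichm\"uller detour.
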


\begin{rem}
  Theorem \ref{Thm:B} generalizes a theorem of Greenberg \cite{Greenberg}.
  \end{rem}
%\begin{rem}
%  There is a typo in (1.1) of \cite{Bouazzaoui2}.
%\end{rem}

\section{Preliminaries}\label{Section:Prelim}
The formula for the discriminant of an arbitrary monic trinomial, due to Swan \cite{Swan}, is given in the following theorem.
\begin{thm}
\label{Thm:Swan}
Let $f(x)=x^N+Ax^M+B\in \Z[x]$, where $0<M<N$. Let $r=\gcd(N,M)$, $N_1=N/r$ and $M_1=M/r$. Then
\begin{equation*}\label{Eq:Del(f)}
\Delta(f)=(-1)^{N(N-1)/2}B^{M-1}D^r,%\left(N^{N/r}B^{(N-M)/r}-(-1)^{N/r}(N-M)^{(N-M)/r}M^{M/r}A^{N/r}\right)^r.
\end{equation*} where
\begin{equation}\label{Eq:D}
D:=N^{N_1}B^{N_1-M_1}-(-1)^{N_1}M^{M_1}(N-M)^{N_1-M_1}A^{N_1}.
\end{equation}
\end{thm}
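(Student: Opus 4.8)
The plan is to obtain the formula from the standard identity relating the discriminant of a polynomial to the resultant of the polynomial with its derivative. Since $f$ is monic of degree $N$,
\[
\Delta(f)=(-1)^{N(N-1)/2}\operatorname{Res}(f,f').
\]
It is cleanest to regard $A$ and $B$ as indeterminates, so that both sides are elements of $\Z[A,B]$; it then suffices to verify the identity on the generic locus where $f$ has $N$ distinct nonzero roots, the degenerate specializations following by continuity (equivalently, because a polynomial identity valid on a Zariski-dense set holds identically). The first step is to factor the derivative as
\[
f'(x)=Nx^{N-1}+AMx^{M-1}=x^{M-1}g(x),\qquad g(x):=Nx^{N-M}+AM,
\]
and to exploit the multiplicativity of the resultant in its second argument, namely $\operatorname{Res}(f,f')=\operatorname{Res}(f,x^{M-1})\cdot\operatorname{Res}(f,g)$.

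The factor $\operatorname{Res}(f,x^{M-1})$ is immediate from evaluating $x^{M-1}$ at the roots $\alpha_i$ of $f$: since $f$ is monic with $\prod_i\alpha_i=(-1)^NB$, one gets $\operatorname{Res}(f,x^{M-1})=\bigl((-1)^NB\bigr)^{M-1}=(-1)^{N(M-1)}B^{M-1}$. For the second factor I would pass to the roots $\beta$ of $g$, using
\[
\operatorname{Res}(f,g)=(-1)^{N(N-M)}N^{N}\prod_{g(\beta)=0}f(\beta).
\]
Each such $\beta$ satisfies $\beta^{N-M}=-AM/N$, and substituting $\beta^{N}=\beta^{N-M}\beta^{M}$ into $f$ collapses the trinomial into a binomial in $\beta^M$:
\[
f(\beta)=\frac{A(N-M)}{N}\,\beta^{M}+B.
\]

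The heart of the argument, and the step I expect to be the main obstacle, is the evaluation of $\prod_{\beta}f(\beta)$, where the exponent $M$ and the degree $d:=N-M$ of $g$ are not coprime. Writing $r=\gcd(N,M)$, $N_1=N/r$, $M_1=M/r$, one checks $\gcd(d,M)=r$ and $d=r(N_1-M_1)$. Hence, as $\beta$ runs over the $d$-th roots of $-AM/N$, the quantity $\beta^{M}$ runs over a fixed scalar multiple of the $(N_1-M_1)$-th roots of unity, each value attained exactly $r$ times. This turns the product into a perfect $r$-th power whose base is evaluated by the elementary identity $\prod_{\omega^{m}=1}(S\omega+B)=B^{m}-(-1)^{m}S^{m}$ with $m=N_1-M_1$. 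The bookkeeping of fractional exponents is what requires care: the stray power of $-AM/N$ appearing in $S$ enters the base only through $S^{m}$, and since $mM/d=M_1\in\Z$ the result is genuinely polynomial. Collecting terms yields
\[
\prod_{\beta}f(\beta)=\left[B^{N_1-M_1}-(-1)^{N_1}\,\frac{A^{N_1}M^{M_1}(N-M)^{N_1-M_1}}{N^{N_1}}\right]^{r}.
\]

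Finally I would assemble the pieces. Since $N=rN_1$, the prefactor $N^{N}=(N^{N_1})^{r}$ can be absorbed into the bracket, clearing the denominators and producing exactly $D^{r}$ with $D$ as in \eqref{Eq:D}; thus $\operatorname{Res}(f,g)=(-1)^{N(N-M)}D^{r}$. Multiplying by the first factor gives
\[
\operatorname{Res}(f,f')=(-1)^{N(M-1)+N(N-M)}B^{M-1}D^{r}=(-1)^{N(N-1)}B^{M-1}D^{r}=B^{M-1}D^{r},
\]
the overall sign being trivial because $N(N-1)$ is even. Restoring the leading factor $(-1)^{N(N-1)/2}$ from the discriminant--resultant relation then yields the claimed formula $\Delta(f)=(-1)^{N(N-1)/2}B^{M-1}D^{r}$. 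A sanity check on $f=x^2+bx+c$ (where $N=2$, $M=1$, $r=1$) confirms both the reduction and the sign, recovering $\Delta(f)=b^2-4c$.
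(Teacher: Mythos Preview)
The paper does not prove this theorem at all: it is quoted as a preliminary result due to Swan, with a citation to \cite{Swan} and no argument given. So there is no ``paper's own proof'' to compare against.

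Your argument is correct and is essentially the standard derivation. The reduction $\Delta(f)=(-1)^{N(N-1)/2}\operatorname{Res}(f,f')$, the factorization $f'(x)=x^{M-1}g(x)$, the multiplicativity $\operatorname{Res}(f,f')=\operatorname{Res}(f,x^{M-1})\operatorname{Res}(f,g)$, and the evaluation of $\operatorname{Res}(f,x^{M-1})=(-1)^{N(M-1)}B^{M-1}$ are all straightforward. The delicate step---computing $\prod_{g(\beta)=0}f(\beta)$ when $\gcd(N-M,M)=r>1$---is handled correctly: fixing one $(N-M)$-th root $\beta_0$ of $-AM/N$, the values $\beta^M$ are $\beta_0^M$ times the $(N_1-M_1)$-th roots of unity, each with multiplicity $r$, and $(\beta_0^M)^{N_1-M_1}=(-AM/N)^{M_1}$, so the apparent fractional exponent disappears. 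Your identity $\prod_{\omega^m=1}(S\omega+B)=B^m-(-1)^mS^m$ is correct, and the bookkeeping $(-1)^{N_1-M_1}(-1)^{M_1}=(-1)^{N_1}$ together with $N^N=(N^{N_1})^r$ absorbs everything into $D^r$ exactly as claimed. The final sign $(-1)^{N(M-1)+N(N-M)}=(-1)^{N(N-1)}=1$ is right, and your sanity check on the quadratic confirms the normalization. The only cosmetic point is that the phrase ``fractional exponents'' might alarm a reader; you could instead simply fix $\beta_0$ and work with honest complex numbers throughout, which is what your computation effectively does.
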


The next two theorems are due to Capelli \cite{S}.
 \begin{thm}\label{Thm:Capelli1}  Let $f(x)$ and $h(x)$ be polynomials in $\Q[x]$ with $f(x)$ irreducible. Suppose that $f(\alpha)=0$. Then $f(h(x))$ is reducible over $\Q$ if and only if $h(x)-\alpha$ is reducible over $\Q(\alpha)$.
 \end{thm}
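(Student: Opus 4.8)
The plan is to prove the logically equivalent statement that $f(h(x))$ is \emph{irreducible} over $\Q$ if and only if $h(x)-\alpha$ is \emph{irreducible} over $\Q(\alpha)$; since reducibility is the negation of irreducibility for nonconstant polynomials, negating both sides yields the asserted reducibility equivalence. The entire argument is organized around a single well-chosen root together with a tower of fields. Write $n=\deg f$ and $m=\deg h$, so that $f(h(x))$ has degree $nm$, and note that $[\Q(\alpha):\Q]=n$ because $f$ is irreducible.

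First I would fix an algebraic closure $\overline{\Q}$ containing $\alpha$ and select a root $\beta\in\overline{\Q}$ of the polynomial $h(x)-\alpha\in\Q(\alpha)[x]$. The crucial feature of this choice is that $h(\beta)=\alpha$ \emph{exactly} (not merely some conjugate of $\alpha$), whence $f(h(\beta))=f(\alpha)=0$; thus $\beta$ is a genuine root of $f(h(x))$, and moreover $\alpha=h(\beta)\in\Q(\beta)$, giving the tower $\Q\subseteq\Q(\alpha)\subseteq\Q(\beta)$. Multiplicativity of degrees then yields
\[
[\Q(\beta):\Q]=[\Q(\beta):\Q(\alpha)]\,[\Q(\alpha):\Q]=n\,[\Q(\beta):\Q(\alpha)].
\]

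Next I would show that both irreducibility conditions are controlled by the single quantity $[\Q(\beta):\Q(\alpha)]$. On the one hand, $\beta$ is a root of $h(x)-\alpha$, a polynomial of degree $m$ over $\Q(\alpha)$, so the minimal polynomial of $\beta$ over $\Q(\alpha)$ divides $h(x)-\alpha$; hence $[\Q(\beta):\Q(\alpha)]\le m$, with equality precisely when $h(x)-\alpha$ is irreducible over $\Q(\alpha)$. On the other hand, the minimal polynomial of $\beta$ over $\Q$ divides $f(h(x))$, so $[\Q(\beta):\Q]\le nm$, with equality precisely when $f(h(x))$ is irreducible over $\Q$. Combining these facts with the degree formula above, I obtain the chain $f(h(x))$ irreducible $\iff [\Q(\beta):\Q]=nm \iff [\Q(\beta):\Q(\alpha)]=m \iff h(x)-\alpha$ irreducible over $\Q(\alpha)$, which is exactly the equivalence sought.

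The step deserving the most care is the implication ``$[\Q(\beta):\Q]=nm \Rightarrow f(h(x))$ is irreducible,'' since in general the distinct irreducible factors of a composed polynomial may have different degrees, so a bound on the degree of one root need not decide irreducibility. What rescues the argument is precisely the choice $h(\beta)=\alpha$, which guarantees that $\beta$ is a root of $f(h(x))$, so its minimal polynomial over $\Q$ divides $f(h(x))$ and, upon having degree $nm=\deg f(h(x))$, must equal $f(h(x))$ up to a scalar. I would also record the degenerate case $m=1$ separately, where $h(x)-\alpha$ is linear and hence always irreducible, matching the fact that $f(h(x))$ is then irreducible as well, thereby confirming that the equivalence holds in every case.
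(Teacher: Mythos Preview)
Your argument is correct: fixing a root $\beta$ of $h(x)-\alpha$ so that $\alpha=h(\beta)\in\Q(\beta)$, and then comparing $[\Q(\beta):\Q(\alpha)]$ with $m$ and $[\Q(\beta):\Q]$ with $nm$ via the tower law, is exactly the standard proof of Capelli's theorem. The delicate implication you flag---that $[\Q(\beta):\Q]=nm$ forces $f(h(x))$ to be irreducible---is indeed handled by the fact that the minimal polynomial of $\beta$ over $\Q$ divides $f(h(x))$ and has the same degree.

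As for comparison with the paper: there is nothing to compare against. The paper does not supply a proof of this statement; it simply records it as a classical result of Capelli and cites Schinzel's monograph. Your write-up therefore goes beyond what the paper itself provides, and the approach you give is the one found in standard references.
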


\begin{thm}\label{Thm:Capelli2}  Let $c\in \Z$ with $c\geq 2$, and let $\alpha\in\C$ be algebraic.  Then $x^c-\alpha$ is reducible over $\Q(\alpha)$ if and only if either there is a prime $p$ dividing $c$ such that $\alpha=\beta^p$ for some $\beta\in\Q(\alpha)$ or $4\mid c$ and $\alpha=-4\beta^4$ for some $\beta\in\Q(\alpha)$.
\end{thm}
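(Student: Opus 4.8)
The statement is an equivalence whose two implications are of quite different character, so the plan is to treat them separately. Write $K=\Q(\alpha)$ and $a=\alpha$; we may assume $a\neq 0$, since if $\alpha=0$ then $x^c$ is reducible while $\alpha=0^p$ makes the right-hand side hold, so both sides are true. Writing $K^p=\{x^p:x\in K\}$, the theorem is the classical irreducibility criterion for the binomial $x^c-a$ over $K$, and since $K$ has characteristic zero this binomial is separable with roots of the form $\zeta^i\beta$, where $\beta$ is one fixed root and $\zeta$ ranges over the $c$-th roots of unity in $\C$. It is cleanest to prove the harder implication in contrapositive form: if $a\notin K^p$ for every prime $p\mid c$ and, when $4\mid c$, also $a\notin -4K^4$, then $x^c-a$ is irreducible over $K$.

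The easy implication (that the stated conditions force reducibility) I would dispatch by exhibiting explicit proper factorizations in $K[x]$. If $\alpha=\beta^p$ with $p\mid c$, write $c=pm$ and substitute $y=x^m$ in $y^p-\beta^p=(y-\beta)\sum_{j=0}^{p-1}y^{j}\beta^{p-1-j}$ to produce the proper factor $x^m-\beta$ of $x^c-\alpha$. If instead $4\mid c$ and $\alpha=-4\beta^4$, write $c=4m$ and use the Sophie Germain identity $(x^m)^4+4\beta^4=\bigl((x^m)^2+2\beta x^m+2\beta^2\bigr)\bigl((x^m)^2-2\beta x^m+2\beta^2\bigr)$. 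In either case $x^c-\alpha$ is reducible.

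For the harder implication I would first isolate the prime-exponent case: if $p$ is prime and $a\notin K^p$, then $x^p-a$ is irreducible over $K$. Supposing a monic factor $g\mid x^p-a$ of degree $d$ with $1\le d<p$, its roots are among the $\zeta^i\beta$, so the constant term satisfies $g(0)=\pm\zeta^{s}\beta^{d}\in K$, whence $g(0)^p=\pm a^{d}$. For $p$ odd, $-1=(-1)^p\in K^p$ gives $a^{d}\in K^p$, and with $\gcd(d,p)=1$ this forces $a\in K^p$, a contradiction; for $p=2$ the only option $d=1$ puts a root in $K$, again giving $a\in K^2$. I would then pass to prime powers $p^{e}$ by induction on $e$: adjoining a root $\delta$ of $x^p-a$ (of degree $p$ over $K$ by the prime case), it suffices to show $x^{p^{e-1}}-\delta$ is irreducible over $L=K(\delta)$, for which I need $\delta\notin L^p$. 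For odd $p$ this follows from the norm computation $N_{L/K}(\delta)=a$, since $\delta=\eta^p$ would give $a=N_{L/K}(\eta)^p\in K^p$. Finally, for a general exponent $c=\prod p_i^{e_i}$ I would recover the full degree by noting that each $\gamma^{c/p_i^{e_i}}$ is a root of the irreducible $x^{p_i^{e_i}}-a$, so $p_i^{e_i}\mid[K(\gamma):K]$; the pairwise-coprime prime powers then force $[K(\gamma):K]=c$, i.e.\ irreducibility.

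The main obstacle is the prime $2$ with $4\mid c$, which is exactly where the $-4K^4$ clause must surface. Here the naive norm argument fails, since $N_{L/K}(\delta)=-a$ only yields $a\in -K^2$, so I would instead work with the explicit basis $\{1,\delta\}$ of $L=K(\delta)$ over $K$: writing a hypothetical square root $\eta=u+v\delta$ of $\pm\delta$ and solving $u^2+av^2=0$ and $2uv=\mp 1$ shows that $\delta\in L^2$ can occur only when $a=-4u^4\in -4K^4$. A short amplification of the same computation shows $\delta\in -4L^4$ likewise forces $a\in -4K^4$, by extracting from $\delta=-4\mu^4$ a square root $2\mu^2$ of a fourth root of $a$ and reducing to the previous case. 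Thus, under $a\notin K^2$ and $a\notin -4K^4$, the inductive step for the $2$-power tower goes through, and assembling the prime case, the prime-power induction, the coprime-degree argument, and the two explicit factorizations completes the equivalence.
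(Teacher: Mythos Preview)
The paper does not supply its own proof of this theorem: it is quoted as a classical result of Capelli with a citation to Schinzel's monograph, and is used as a black box in the proof of Lemma~3.1. So there is no paper proof to compare against.

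That said, your argument is essentially the standard proof one finds in Lang's \emph{Algebra} (Chapter~VI, \S9) or in Schinzel, and it is correct. The structure---explicit factorizations for the ``if'' direction, then the prime case via the constant term of a hypothetical factor, the prime-power tower by induction using the norm (with the explicit $\{1,\delta\}$-basis computation to handle the delicate $p=2$ step where the norm alone is insufficient), and finally the coprime-degree assembly---is exactly the classical route. Two small remarks: in the prime case you should be slightly more explicit that $\gcd(d,p)=1$ lets you write $1=ud+vp$ and hence $a=(a^d)^u(a^v)^p\in K^p$; and in the $p=2$ induction you correctly note that for $e\ge 3$ one must also rule out $\delta\in -4L^4$, which you do by observing $\delta=-4\mu^4$ gives $-\delta=(2\mu^2)^2$ and then reusing the $\{1,\delta\}$ computation. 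With those points made explicit, the proof is complete.
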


The following theorem is a compilation of results from various sources.
\begin{thm}\label{Thm:Period}
  %Let $\{U_n(k,-1)\}_{n\ge 0}$ be the Lucas sequence as defined in \eqref{Eq:Lucas}.
  Let $p$ be a prime.
  \begin{enumerate}
  \item \label{I-1} $\pi_k(p)=2$ if and only if $k\equiv 0 \pmod{p}$.
   \item \label{I0:p=2} If $p=2$, then \[\pi_k(p)=\left\{\begin{array}{cl}
    2 & \mbox{if $k\equiv 0 \pmod{2}$}\\
    3 & \mbox{if $k\equiv 1 \pmod{2}$}
  \end{array}\right. \quad\mbox{and} \quad \pi_k(p^2)=\left\{\begin{array}{cl}
    2 & \mbox{if $k\equiv 0 \pmod{4}$}\\
    6 & \mbox{if $k\equiv 1 \pmod{4}$}\\
    4 & \mbox{if $k\equiv 2 \pmod{4}$}\\
    6 & \mbox{if $k\equiv 3 \pmod{2}$.}
  \end{array}\right.\]
   \item \label{I:even} If $p\ge 3$, then $\pi_k(p)\equiv  0\pmod{2}$.
    \item  \label{I4:R} If $p\ge 3$, then $\pi_k(p^2)\in \{\pi_k(p),p\pi_k(p)\}$.
    \item  \label{I1:QR} If $p\ge 3$ and $\delta=1$, then $p-1\equiv 0 \pmod{\pi_k(p)}$.
    \item  \label{I2:QNR} If $\delta=-1$, then $2(p+1)\equiv 0 \pmod{\pi_k(p)}$.
    %\item  \label{I3:R} If $p\ge 3$ and $\D\equiv 0 \pmod{p}$, then $\pi_k(p)=4p$.
     \end{enumerate}
\end{thm}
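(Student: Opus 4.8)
The plan is to reformulate every assertion in terms of the companion matrix
\[M=\begin{pmatrix} k & 1\\ 1 & 0\end{pmatrix},\]
which satisfies $\left(U_{n+1},U_n\right)^{\mathrm T}=M\left(U_n,U_{n-1}\right)^{\mathrm T}$ by \eqref{Eq:Lucas}. Since the two vectors $(1,0)^{\mathrm T}$ and $(k,1)^{\mathrm T}=M(1,0)^{\mathrm T}$ are linearly independent modulo any $p$, the vector $(1,0)^{\mathrm T}$ is cyclic for $M$; hence the period of $\{U_n\}$ modulo $p^a$ is exactly the multiplicative order of $M$ in $\mathrm{GL}_2(\Z/p^a\Z)$, and in particular $M^{\pi_k(p^a)}\equiv I$. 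I would record the basic data $\det M=-1$ and $\tr M=k$, so the eigenvalues $\alpha,\beta$ of $M$ satisfy $\alpha+\beta=k$, $\alpha\beta=-1$ and $(\alpha-\beta)^2=k^2+4$. Because $\D$ and $k^2+4$ differ only by the square factor $4$ when $k$ is even, one has $\delta=\left(\frac{\D}{p}\right)=\left(\frac{k^2+4}{p}\right)$ for every odd $p$, so $\delta$ records precisely how $x^2-kx-1$ splits modulo $p$.

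From this setup the elementary assertions fall out quickly. For $\pi_k(p)=2$ if and only if $k\equiv0\pmod p$, I would compute $M^2=\begin{pmatrix}k^2+1 & k\\ k & 1\end{pmatrix}$ and observe that $M^2\equiv I\pmod p$ holds exactly when $k\equiv0\pmod p$ (period $1$ is impossible since $U_0\ne U_1$). The parity statement for $p\ge3$ follows from $\det\!\left(M^{\pi_k(p)}\right)=(-1)^{\pi_k(p)}=\det I=1$, which forces $\pi_k(p)$ to be even because $-1\ne1$ in $\F_p$. The values of $\pi_k(2)$ and $\pi_k(4)$ I would obtain by direct finite computation of $\{U_n\}$ modulo $2$ and modulo $4$ according to $k\bmod4$.

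For the two divisibility statements I would pass to the eigenvalues. When $\delta=1$ the roots $\alpha,\beta$ lie in $\F_p^\ast$ and are distinct (as $p\nmid k^2+4$), so $M$ is diagonalizable over $\F_p$ and $\pi_k(p)=\ord(M)=\lcm\!\left(\ord\alpha,\ord\beta\right)$, which divides $p-1$. When $\delta=-1$ the roots lie in $\F_{p^2}\setminus\F_p$ and are Frobenius-conjugate, $\beta=\alpha^p$; then $\alpha\beta=\alpha^{p+1}=-1$, whence $\alpha^{2(p+1)}=1$ and $\ord(\alpha)\mid 2(p+1)$, giving $\pi_k(p)\mid 2(p+1)$.

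The remaining statement, $\pi_k(p^2)\in\{\pi_k(p),\,p\,\pi_k(p)\}$ for $p\ge3$, is the one I expect to require the most care, and I would prove it by a lifting argument at the level of $M$. Writing $\pi=\pi_k(p)$, we have $M^{\pi}=I+pN$ for an integer matrix $N$. Since $M^d\equiv I\pmod{p^2}$ forces $M^d\equiv I\pmod p$ and hence $\pi\mid d$, writing $d=\pi e$ gives $M^d=(I+pN)^e$, so $\ord(M\bmod p^2)=\pi\cdot\ord(I+pN\bmod p^2)$. The binomial expansion yields $(I+pN)^e\equiv I+epN\pmod{p^2}$, so $(I+pN)^e\equiv I\pmod{p^2}$ holds for all $e$ when $N\equiv0\pmod p$ and exactly when $e\equiv0\pmod p$ otherwise; thus $\ord(I+pN\bmod p^2)\in\{1,p\}$ and $\pi_k(p^2)\in\{\pi,p\pi\}$. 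The delicate points are verifying the factorization $\ord(M\bmod p^2)=\pi\cdot\ord(I+pN\bmod p^2)$ and that the order can jump by a factor of at most $p$; both rest on the congruence $(I+pN)^e\equiv I+epN\pmod{p^2}$, whose higher binomial terms carry a factor $p^2$. This congruence is valid for every prime, so the restriction to $p\ge3$ in the statement is only organizational, the case $p=2$ having already been settled by the explicit computation above.
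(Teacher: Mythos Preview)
Your argument is correct and self-contained. The companion-matrix identification $\pi_k(p^a)=\ord\!\left(M\bmod p^a\right)$ is justified exactly as you say via cyclicity of $(1,0)^{\mathrm T}$; from there the determinant trick for item~\eqref{I:even}, the diagonalisation over $\F_p$ or $\F_{p^2}$ for items~\eqref{I1:QR}--\eqref{I2:QNR}, and the binomial lift $(I+pN)^e\equiv I+epN\pmod{p^2}$ for item~\eqref{I4:R} all go through cleanly. The one point worth making explicit is that in item~\eqref{I1:QR} the hypothesis $\delta=1$ already excludes $p\mid k^2+4$, so the two eigenvalues are genuinely distinct and $M$ is diagonalisable; you implicitly use this when writing $\ord(M)=\lcm(\ord\alpha,\ord\beta)$.

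This is a genuinely different route from the paper's. The paper treats Theorem~\ref{Thm:Period} as a compilation: items~\eqref{I-1} and~\eqref{I0:p=2} by direct calculation, item~\eqref{I:even} as a special case of \cite{FP}, item~\eqref{I4:R} as a special case of \cite{Renault}, and items~\eqref{I1:QR}--\eqref{I2:QNR} by invoking \cite{GRS}. Your approach instead proves everything from scratch with a single unifying device, the companion matrix, and in fact anticipates the matrix viewpoint the paper later adopts (via \cite{Robinson}) in the proof of Lemma~\ref{Lem:Order}. What the paper's approach buys is brevity and a paper trail to sharper results in the literature; what yours buys is self-containment and a transparent reason \emph{why} each divisibility holds, at the cost of a few extra lines.
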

\begin{proof}
Items \eqref{I-1} and \eqref{I0:p=2} of Theorem \ref{Thm:Period} follow easily by direct calculation, item \eqref{I:even} is a special case of work found in \cite{FP}, item \eqref{I4:R} is a special case of a result in \cite{Renault}, while items \eqref{I1:QR} and \eqref{I2:QNR} follow from
two theorems in \cite{GRS}.
\end{proof}

The next proposition appears as Proposition 1 in \cite{Y}.
\begin{prop}\label{Prop:Yokoi}
Let $k\ge 1$ be an integer, such that $k\ne 4$ and $\D$ is squarefree. Then $\varepsilon:=(k+\sqrt{k^2+4})/2$ is the fundamental unit of $\Q(\sqrt{\D})$ with $\NN(\varepsilon)=-1$, where $\NN:=\NN_{\Q(\varepsilon)/\Q}$ denotes the algebraic norm.
\end{prop}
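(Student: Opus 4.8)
The plan is to verify directly that $\varepsilon$ is a unit of norm $-1$, and then to prove it is fundamental via the classical ``smallest $\sqrt{\D}$-coefficient'' criterion, with a case analysis on $k \bmod 4$ in which the hypothesis $k\ne 4$ enters exactly where it is needed. First I would note that $\varepsilon=(k+\sqrt{k^2+4})/2$ is a root of the monic polynomial $x^2-kx-1\in\Z[x]$, hence an algebraic integer, with conjugate $\overline{\varepsilon}=(k-\sqrt{k^2+4})/2$; thus $\NN(\varepsilon)=\varepsilon\,\overline{\varepsilon}=(k^2-(k^2+4))/4=-1$, so $\varepsilon\in\Z_K^{\times}$, and clearly $\varepsilon>1$ since $k\ge 1$. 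Because $k^2+4=4\D$ when $k$ is even, one has $\Q(\sqrt{\D})=\Q(\sqrt{k^2+4})$ in either parity, so $\varepsilon$ indeed lies in the field in question.

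Next I would invoke the monotonicity principle for units of a real quadratic field $K$: writing any unit $\eta>1$ in the real embedding as $\eta=X+Y\sqrt{\D}$ with $X,Y\in\tfrac12\Z$, one has $X,Y>0$, and from $\NN(\eta)=X^2-\D Y^2=\pm1$ we get $\eta=Y\sqrt{\D}+\sqrt{\D Y^2\pm1}$, which is strictly increasing in $Y>0$. Consequently the fundamental unit is the unit exceeding $1$ with the smallest admissible $Y$-coordinate, and at a fixed minimal $Y$ the norm $-1$ solution, when it exists, is the smaller of the (at most) two. Because $\varepsilon$ has norm $-1$, it therefore suffices to show that $\varepsilon$ realizes the least positive $Y$-coordinate occurring among all units.

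The remaining work is a case analysis governed by $\Z_K$. If $k$ is odd then $\D=k^2+4\equiv1\pmod 4$, $\Z_K=\Z[(1+\sqrt{\D})/2]$, and $\varepsilon=(k+\sqrt{\D})/2$ has $Y=\tfrac12$, the least possible value; if $k\equiv2\pmod4$ then $\D=(k/2)^2+1\equiv2\pmod4$, $\Z_K=\Z[\sqrt{\D}]$, and $\varepsilon=(k/2)+\sqrt{\D}$ has $Y=1$, again least possible. In these two cases minimality of $Y$ is immediate, and the norm $-1$ selection at that minimal $Y$ finishes the proof. The delicate case is $k\equiv0\pmod4$ with $k\ne4$, where $\D=(k/2)^2+1\equiv1\pmod4$ so that $\Z_K=\Z[(1+\sqrt{\D})/2]$, yet $\varepsilon=(k/2)+\sqrt{\D}$ has $Y=1$ rather than the a priori minimum $\tfrac12$. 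Here I would exclude any unit with $Y=\tfrac12$ by analyzing $u^2-\D=\pm4$, i.e. $u^2=(k/2)^2+5$ or $u^2=(k/2)^2-3$; factoring $(u\mp k/2)(u\pm k/2)=5$ or $3$ shows the only integer solution forces $k=4$, so for $k\ne4$ no such unit exists and $Y=1$ is again minimal.

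I expect the main obstacle to be precisely this case $k\equiv0\pmod4$: both the change in the ring of integers when $\D\equiv1\pmod4$ and the need to isolate $k=4$ as the sole exception hinge on the small norm equations above. This is exactly the degeneracy the hypothesis $k\ne4$ rules out, since for $k=4$ one has $\D=5$ and $\varepsilon=2+\sqrt5=\bigl((1+\sqrt5)/2\bigr)^3$, the cube of the true fundamental unit rather than the fundamental unit itself.
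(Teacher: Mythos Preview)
Your argument is correct. The monotonicity step deserves a word of care: you assert that among units $\eta>1$ the $Y$-coordinate orders them, but your one-line justification (``$\eta=Y\sqrt{\D}+\sqrt{\D Y^2\pm1}$ is increasing in $Y$'') treats the two norm branches separately and does not immediately compare a norm $+1$ unit at one $Y$ with a norm $-1$ unit at a larger $Y$. The gap closes easily---from $X_i^2=\D Y_i^2+\sigma_i$ and $Y_2>Y_1\ge\tfrac12$ one gets $\D(Y_2^2-Y_1^2)\ge\min(5\cdot\tfrac34,\,2\cdot3)>2\ge\sigma_1-\sigma_2$, so $X_2>X_1$ and hence $\eta_2>\eta_1$---but it is worth making explicit, since for $\D=5$ the fundamental unit and its square share the \emph{same} $Y=\tfrac12$, showing that the $Y$-sequence of powers is only weakly increasing. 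With that tweak, your case analysis is clean, and the isolation of $k=4$ via the factorizations $(u\mp k/2)(u\pm k/2)\in\{5,-3\}$ is exactly right.

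As to comparison with the paper: there is no proof in the paper to compare against. The author simply records the statement as Proposition~1 of Yokoi's 1968 paper \cite{Y} and moves on. Your write-up therefore supplies a self-contained elementary argument where the paper relies on a citation; the benefit is that a reader need not chase the reference, and your treatment also makes transparent why the single exception $k=4$ arises (namely $\varepsilon=2+\sqrt5=\bigl((1+\sqrt5)/2\bigr)^3$). Yokoi's original proof proceeds via continued fractions rather than the bare minimal-$Y$ criterion, so even relative to the cited source your route is somewhat different in flavor, though both are standard.
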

Whenever the hypotheses of Proposition \ref{Prop:Yokoi} hold in the sequel, we assume that $\varepsilon$ denotes the fundamental unit $(k+\sqrt{k^2+4})/2$ of $\Q(\sqrt{\D})$. %as described in Proposition \ref{Prop:Yokoi}.

The following theorem, known as \emph{Dedekind's Index Criterion}, or simply \emph{Dedekind's Criterion} if the context is clear, is a standard tool used in determining the monogenicity of a polynomial.
\begin{thm}[Dedekind \cite{Cohen}]\label{Thm:Dedekind}
Let $K=\Q(\theta)$ be a number field, $T(x)\in \Z[x]$ the monic minimal polynomial of $\theta$, and $\Z_K$ the ring of integers of $K$. Let $p$ be a prime number and let $\overline{ * }$ denote reduction of $*$ modulo $p$ (in $\Z$, $\Z[x]$ or $\Z[\theta]$). Let
\[\overline{T}(x)=\prod_{i=1}^k\overline{\tau_i}(x)^{e_i}\]
be the factorization of $T(x)$ modulo $p$ in $\F_p[x]$, and set
\[g(x)=\prod_{i=1}^k\tau_i(x),\]
where the $\tau_i(x)\in \Z[x]$ are arbitrary monic lifts of the $\overline{\tau_i}(x)$. Let $h(x)\in \Z[x]$ be a monic lift of $\overline{T}(x)/\overline{g}(x)$ and set
\[F(x)=\dfrac{g(x)h(x)-T(x)}{p}\in \Z[x].\]
Then
\[\left[\Z_K:\Z[\theta]\right]\not \equiv 0 \pmod{p} \Longleftrightarrow \gcd\left(\overline{F},\overline{g},\overline{h}\right)=1 \mbox{ in } \F_p[x].\]
\end{thm}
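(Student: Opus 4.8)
The plan is to reinterpret the index condition ring-theoretically and then carry out an explicit computation inside the order $\mathcal{O}=\Z[\theta]$. Write $n=\deg T$. Since $\Z_K/\mathcal{O}$ is a finite abelian group, $p\mid[\Z_K:\mathcal{O}]$ if and only if its $p$-primary part is nontrivial, which happens precisely when there is an element $\alpha\in\Z_K\setminus\mathcal{O}$ with $p\alpha\in\mathcal{O}$. Reducing modulo $T$, every such $\alpha$ has the form $\alpha=A(\theta)/p$ with $A\in\Z[x]$, $\deg A<n$, and $\overline{A}\neq 0$ in $\F_p[x]$. Thus the whole statement reduces to deciding when such an $A$ exists, and the goal is to match that existence with the failure of $\gcd(\overline{F},\overline{g},\overline{h})=1$.

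First I would introduce the radical ideal $\mathfrak{I}=\sqrt{p\mathcal{O}}$. In the Artinian ring $\mathcal{O}/p\mathcal{O}\cong\F_p[x]/(\overline{T})$ the nilradical is generated by $\overline{g}=\prod_i\overline{\tau_i}$, since $\overline{T}=\prod_i\overline{\tau_i}^{e_i}$; lifting back gives $\mathfrak{I}=p\mathcal{O}+g(\theta)\mathcal{O}$. The key structural input I would invoke is the standard criterion from the theory of orders (the single enlargement step of Pohst--Zassenhaus): $\mathcal{O}$ is $p$-maximal, i.e.\ $p\nmid[\Z_K:\mathcal{O}]$, if and only if the multiplier ring $(\mathfrak{I}:\mathfrak{I})=\{x\in K:\ x\mathfrak{I}\subseteq\mathfrak{I}\}$ equals $\mathcal{O}$. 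Because $p\in\mathfrak{I}$, any $x\in(\mathfrak{I}:\mathfrak{I})$ satisfies $px\in\mathfrak{I}\subseteq\mathcal{O}$, so every new element again has the shape $A(\theta)/p$; hence the problem becomes the concrete one of computing $(\mathfrak{I}:\mathfrak{I})$.

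The computational heart, and the step I would organize most carefully, is translating membership $A(\theta)/p\in(\mathfrak{I}:\mathfrak{I})$ into polynomial divisibilities. Testing $(A(\theta)/p)\cdot p=A(\theta)\in\mathfrak{I}$ gives the first condition $\overline{g}\mid\overline{A}$, say $\overline{A}=\overline{g}\,\overline{A_1}$. Testing $(A(\theta)/p)\cdot g(\theta)\in\mathfrak{I}$ is where the trinomial data $F$ enters: the defining identity $g(x)h(x)=T(x)+pF(x)$ together with $T(\theta)=0$ yields the crucial relation $g(\theta)h(\theta)=pF(\theta)$, which lets me rewrite $g(\theta)^2A_1(\theta)/p$ as an honest element of $\mathcal{O}$ and reduce the membership test to $\overline{g}\mid\overline{F}\,\overline{B}$, where $\overline{B}=\overline{g}\,\overline{A_1}/\overline{h}$ (the quotient existing exactly when $\overline{h}\mid\overline{g}\,\overline{A_1}$, which is the requirement that the element be integral at all). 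Factoring everything through $\overline{g}=\prod_i\overline{\tau_i}$ and $\overline{h}=\prod_i\overline{\tau_i}^{e_i-1}$, a short bookkeeping argument shows that a nonzero admissible $A_1$ of degree below $\deg\overline{h}$ exists if and only if some $\overline{\tau_i}$ with $e_i\ge 2$ divides $\overline{F}$; since $\gcd(\overline{g},\overline{h})=\prod_{e_i\ge2}\overline{\tau_i}$, this is exactly the statement $\gcd(\overline{F},\overline{g},\overline{h})\neq 1$. Assembling the equivalences gives the theorem.

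I expect two points to demand the most care. The main conceptual obstacle is justifying the structural criterion that a single pass to $(\mathfrak{I}:\mathfrak{I})$ detects non-maximality; this rests on the interplay between the radical $\sqrt{p\mathcal{O}}$ and the conductor of $\mathcal{O}$ in $\Z_K$ after localizing at $p$, and I would either prove it directly via the conductor or cite it from the theory of orders. The main computational subtlety is the clean extraction of $\overline{F}$ from the second multiplier condition, together with checking that $\gcd(\overline{F},\overline{g},\overline{h})$ is independent of the arbitrary monic lifts: replacing $g,h$ by $g+pa,\,h+pb$ changes $\overline{F}$ only by $\overline{a}\,\overline{h}+\overline{b}\,\overline{g}$, an $\F_p[x]$-combination of $\overline{g}$ and $\overline{h}$, which leaves the gcd unchanged.
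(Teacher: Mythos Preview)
The paper does not prove this statement: it is quoted as Dedekind's classical index criterion, with a citation to Cohen's textbook, and is then used only as a black-box tool in the proof of the main theorem. There is therefore no in-paper argument to compare your proposal against.

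For what it is worth, your outline is the standard Pohst--Zassenhaus ``round-2'' derivation (reformulate $p$-maximality of $\mathcal{O}=\Z[\theta]$ as $(\mathfrak{I}:\mathfrak{I})=\mathcal{O}$ for the $p$-radical $\mathfrak{I}=p\mathcal{O}+g(\theta)\mathcal{O}$, then compute the multiplier ring explicitly via the identity $g(\theta)h(\theta)=pF(\theta)$), and this is essentially how the result is obtained in the cited reference. The sketch is sound; the one phrasing I would adjust is the parenthetical that $\overline{h}\mid\overline{g}\,\overline{A_1}$ is ``the requirement that the element be integral at all'': integrality of $A(\theta)/p$ is automatic once it lies in $(\mathfrak{I}:\mathfrak{I})$, and that divisibility is rather the condition that $A(\theta)g(\theta)/p$ already land in $\mathcal{O}$ before one can test its membership in $\mathfrak{I}$.
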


The next result is essentially an algorithmic adaptation of Theorem \ref{Thm:Dedekind} specifically for trinomials. %, and will be our main tool for this section.
\begin{thm}{\rm \cite{JKS2}}\label{Thm:JKS}
Let $N\ge 2$ be an integer.
Let $K=\Q(\theta)$ be an algebraic number field with $\theta\in \Z_K$, the ring of integers of $K$, having minimal polynomial $f(x)=x^{N}+Ax^M+B$ over $\Q$, with $\gcd(M,N)=r$, $N_1=N/r$ and $M_1=M/r$. Let $D$ be as defined in \eqref{Eq:D}. A prime factor $p$ of $\Delta(f)$ does not divide $\left[\Z_K:\Z[\theta]\right]$ if and only if $p$ satisfies one of the following items: % all of the following statements are true: %$p$ satisfies one of the following conditions:
\begin{enumerate}[font=\normalfont]
  \item \label{JKS:I1} when $p\mid A$ and $p\mid B$, then $p^2\nmid B$;
  \item \label{JKS:I2} when $p\mid A$ and $p\nmid B$, then
  \[\mbox{either } \quad p\mid A_2 \mbox{ and } p\nmid B_1 \quad \mbox{ or } \quad p\nmid A_2\left((-B)^{M_1}A_2^{N_1}-\left(-B_1\right)^{N_1}\right),\]
  where $A_2=A/p$ and $B_1=\frac{B+(-B)^{p^e}}{p}$ with $p^e\mid\mid N$;
  \item \label{JKS:I3} when $p\nmid A$ and $p\mid B$, then
  \[\mbox{either } \quad p\mid A_1 \mbox{ and } p\nmid B_2 \quad \mbox{ or } \quad p\nmid A_1B_2^{M-1}\left((-A)^{M_1}A_1^{N_1-M_1}-\left(-B_2\right)^{N_1-M_1
  }\right),\]
  where $A_1=\frac{A+(-A)^{p^j}}{p}$ with $p^j\mid\mid (N-M)$, and $B_2=B/p$;
  \item \label{JKS:I4} when $p\nmid AB$ and $p\mid M$ with $N=up^m$, $M=vp^m$, $p\nmid \gcd\left(u,v\right)$, then the polynomials
   \begin{equation*}
     x^{N/p^m}+Ax^{M/p^m}+B \quad \mbox{and}\quad \dfrac{Ax^{M}+B+\left(-Ax^{M/p^m}-B\right)^{p^m}}{p}
   \end{equation*}
   are coprime modulo $p$;
         \item \label{JKS:I5} when $p\nmid ABM$, then $p^2\nmid D/r^{N_1}$.
     %\[p^2\nmid \left(B^{N_1-M_1}N_1^{N_1}-(-1)^{M_1}A^{N_1}M_1^{M_1}(M_1-N_1)^{N_1-M_1} \right).\]
   \end{enumerate}
\end{thm}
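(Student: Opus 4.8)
The plan is to obtain all five items as direct applications of Dedekind's Index Criterion (Theorem \ref{Thm:Dedekind}) to the minimal polynomial $T(x)=f(x)=x^N+Ax^M+B$. First I would note that, since $0<M<N$, the five items are exhaustive and mutually exclusive: one partitions according to whether $p$ divides $A$, whether $p$ divides $B$, and, in the remaining case $p\nmid AB$, whether $p$ divides $M$. For each case the strategy is the same: compute the squarefree factorization of $\overline f$ in $\F_p[x]$, take $g$ to be a lift of the radical and $h$ a lift of the cofactor $\overline f/\overline g$, form $F=(gh-f)/p\in\Z[x]$, and then translate the equivalence
\[\left[\Z_K:\Z[\theta]\right]\not\equiv 0 \pmod p \Longleftrightarrow \gcd\left(\overline F,\overline g,\overline h\right)=1\]
into the stated arithmetic condition.

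The recurring technical device is the Frobenius identity in $\F_p[x]$: since $c^p=c$ for every $c\in\F_p$, one has $x^a+c\equiv\left(x^{a/p^t}+c\right)^{p^t}$ whenever $p^t\mid\mid a$, and more generally $x^N+\overline A x^M+\overline B\equiv\left(x^{N/p^m}+\overline A x^{M/p^m}+\overline B\right)^{p^m}$ when $p^m\mid\mid\gcd(N,M)$. This exhibits the repeated-factor structure of $\overline f$ explicitly, so that $\overline g$ and $\overline h$ share exactly the radical of the contracted polynomial. In item \eqref{JKS:I1} ($p\mid A$, $p\mid B$) one has $\overline f=x^N$, so $g=x$, $h=x^{N-1}$, and $F=-(A/p)x^M-B/p$; here $\gcd(\overline g,\overline h)=x$, and the criterion collapses to the constant term $-B/p$ being a unit modulo $p$, i.e. $p^2\nmid B$. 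Items \eqref{JKS:I2} and \eqref{JKS:I3} are mirror images: when $p\mid A$, $p\nmid B$ the shared factor is $\left(x^{N/p^e}+\overline B\right)^{p^e}$ with $p^e\mid\mid N$, and reducing $F$ modulo it produces the expression in $A_2$ and $B_1$; when $p\nmid A$, $p\mid B$ the factorization $\overline f=x^M\left(x^{N-M}+\overline A\right)$ contributes both a repeated factor $x$ (yielding the $B_2$ condition) and a Frobenius factor with $p^j\mid\mid(N-M)$ (yielding the condition in $A_1$).

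For items \eqref{JKS:I4} and \eqref{JKS:I5}, where $p\nmid AB$, the repeated factors come from the common zeros of $\overline f$ and $\overline{f'}=\overline{Nx^{N-1}+MAx^{M-1}}$. In item \eqref{JKS:I4} ($p\mid M$) the whole trinomial contracts, the residual computation of $gh-f$ reproduces precisely the normalized difference $\frac{Ax^M+B+(-Ax^{M/p^m}-B)^{p^m}}{p}$, and the Dedekind gcd condition becomes the stated coprimality with the contracted trinomial $x^{N/p^m}+Ax^{M/p^m}+B$. In item \eqref{JKS:I5} ($p\nmid ABM$) the polynomial $\overline f$ is separable except possibly at the zeros of $Nx^{N-M}+MA$, and the final step is to compute $\overline F$ modulo the radical and match it, through Swan's formula (Theorem \ref{Thm:Swan}) together with \eqref{Eq:D}, to the quantity $D/r^{N_1}$, so that $\gcd\left(\overline F,\overline g,\overline h\right)=1$ becomes $p^2\nmid D/r^{N_1}$.

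I expect the main obstacle to lie in items \eqref{JKS:I2}, \eqref{JKS:I3}, and especially \eqref{JKS:I5}, where carrying the division by $p$ through the multinomial expansion of the Frobenius factorization and then reducing modulo the radical demands careful bookkeeping of which terms survive. The most delicate point is the passage from the Dedekind gcd in item \eqref{JKS:I5} to the exact quantity $D/r^{N_1}$ rather than merely an equivalent congruence: this requires a precise resultant identification tying $\gcd\left(\overline F,\overline g,\overline h\right)=1$ to $p^2\nmid D/r^{N_1}$, as opposed to a cruder order-of-vanishing argument.
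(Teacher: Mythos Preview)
The paper does not prove this theorem; it is quoted from \cite{JKS2} (Jakhar--Khanduja--Sangwan) and stated without proof, so there is no in-paper argument to compare your proposal against. Your outline---specializing Dedekind's criterion (Theorem~\ref{Thm:Dedekind}) to the trinomial $T(x)=x^N+Ax^M+B$, using the Frobenius identity in $\F_p[x]$ to expose the repeated-factor structure of $\overline{f}$ in each of the five mutually exclusive divisibility cases, and then reducing $\overline{F}$ modulo the radical---is the natural strategy and is indeed the approach taken in the cited source. Your assessment that the delicate point is item~\eqref{JKS:I5}, where the gcd condition must be matched to the exact quantity $D/r^{N_1}$, is accurate.
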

\begin{rem}
We will find both Theorem \ref{Thm:Dedekind} and Theorem \ref{Thm:JKS} useful in our investigations.
\end{rem}

The next theorem follows from Corollary (2.10) in \cite{Neukirch}.
\begin{thm}\label{Thm:CD}%{\rm \cite{Neukirch}}
  Let $K$ and $L$ be number fields with $K\subset L$. Then \[\Delta(K)^{[L:K]} \bigm|\Delta(L).\]
\end{thm}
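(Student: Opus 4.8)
The goal is to establish the tower formula $\Delta(K)^{[L:K]}\bigm|\Delta(L)$ for number fields $K\subset L$, which is asserted to follow from Corollary (2.10) in \cite{Neukirch}. The plan is to reduce the divisibility to a statement about the conductor--discriminant behavior in the tower $\Q\subset K\subset L$, exploiting the multiplicativity of the relative different and the fact that the discriminant is the norm of the different down to $\Q$. First I would introduce the relative different $\mathfrak{d}_{L/K}$ and the absolute differents $\mathfrak{d}_{L/\Q}$ and $\mathfrak{d}_{K/\Q}$, together with their associated discriminant ideals, recalling that for any tower of extensions the differents satisfy the chain relation $\mathfrak{d}_{L/\Q}=\mathfrak{d}_{L/K}\cdot\bigl(\mathfrak{d}_{K/\Q}\Z_L\bigr)$, where $\mathfrak{d}_{K/\Q}\Z_L$ denotes the extension of the ideal $\mathfrak{d}_{K/\Q}$ to the ring of integers $\Z_L$.

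Next I would take norms from $L$ down to $\Q$ and apply the transitivity of the norm through the intermediate field $K$. The discriminant of $L$ is $\Delta(L)=\Norm_{L/\Q}(\mathfrak{d}_{L/\Q})$, up to the usual sign conventions that do not affect divisibility. Applying $\Norm_{L/\Q}=\Norm_{K/\Q}\circ\Norm_{L/K}$ to the chain relation for the differents, the factor coming from $\mathfrak{d}_{K/\Q}\Z_L$ contributes $\Norm_{L/\Q}\bigl(\mathfrak{d}_{K/\Q}\Z_L\bigr)=\Norm_{K/\Q}\bigl(\mathfrak{d}_{K/\Q}\bigr)^{[L:K]}=\Delta(K)^{[L:K]}$, because extending an ideal of $\Z_K$ to $\Z_L$ and then taking the relative norm back to $K$ multiplies it by the degree $[L:K]$ in the exponent. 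This is exactly the claimed factor, and it divides $\Delta(L)$ since the remaining factor $\Norm_{L/\Q}(\mathfrak{d}_{L/K})$ is an integral ideal (in fact the relative discriminant norm). Thus $\Delta(K)^{[L:K]}\bigm|\Delta(L)$ as ideals of $\Z$, which gives the divisibility of the integers.

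I would organize the write-up around the single display
\[
\Delta(L)=\Norm_{L/\Q}\bigl(\mathfrak{d}_{L/\Q}\bigr)
=\Norm_{K/\Q}\!\Bigl(\Norm_{L/K}(\mathfrak{d}_{L/K})\cdot\mathfrak{d}_{K/\Q}^{\,[L:K]}\Bigr)
=\Delta(L/K)\cdot\Delta(K)^{[L:K]},
\]
where $\Delta(L/K)=\Norm_{K/\Q}\bigl(\Norm_{L/K}(\mathfrak{d}_{L/K})\bigr)$ is the (absolute norm of the) relative discriminant, and then note that $\Delta(L/K)$ is a nonzero rational integer, so the product form immediately yields the divisibility. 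Alternatively, if one prefers to cite \cite{Neukirch} directly, Corollary (2.10) there records precisely the tower formula $\Delta(L)=\Norm_{K/\Q}\bigl(\mathfrak{d}_{L/K}\ \text{discriminant}\bigr)\cdot\Delta(K)^{[L:K]}$, from which the statement is immediate.

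\textbf{Main obstacle.} The delicate point is the exponent bookkeeping in the step $\Norm_{K/\Q}\bigl(\mathfrak{d}_{K/\Q}\Z_L\bigr)=\Delta(K)^{[L:K]}$: one must correctly track how the relative norm $\Norm_{L/K}$ interacts with extension of ideals, using that $\Norm_{L/K}(\mathfrak{a}\Z_L)=\mathfrak{a}^{[L:K]}$ for an ideal $\mathfrak{a}$ of $\Z_K$, rather than leaving it as $\mathfrak{a}$. Keeping this exponent $[L:K]$ separate from the ramification exponents hidden inside $\mathfrak{d}_{L/K}$ is where errors typically creep in, and it is the reason the conclusion has $\Delta(K)$ raised to the power $[L:K]$ rather than to the first power.
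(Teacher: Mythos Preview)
Your argument is correct and is precisely the standard proof: the tower formula for the different $\mathfrak{d}_{L/\Q}=\mathfrak{d}_{L/K}\cdot(\mathfrak{d}_{K/\Q}\Z_L)$ together with transitivity of the norm yields $\Delta(L)=\Norm_{K/\Q}\bigl(\Norm_{L/K}(\mathfrak{d}_{L/K})\bigr)\cdot\Delta(K)^{[L:K]}$, whence the divisibility. The paper does not give its own proof of this statement; it simply records that the result follows from Corollary~(2.10) in \cite{Neukirch}, and that corollary is exactly the discriminant tower formula you derive. So your write-up supplies the content that the paper only cites.

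One small typographical slip: in your ``Main obstacle'' paragraph you write $\Norm_{K/\Q}\bigl(\mathfrak{d}_{K/\Q}\Z_L\bigr)=\Delta(K)^{[L:K]}$, but the norm here should be $\Norm_{L/\Q}$, as you correctly had it in the body of the argument.
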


\section{The Proof of Theorem \ref{Thm:Main}}

We first prove some lemmas.
\begin{lemma}\label{Lem:Irreducible}
Let $k$ and $s$ be positive integers with $k\ne 4$. Let $f(x)=x^2-kx-1$.  If $\D$ is squarefree, then $f(x^{s^n})$ is irreducible over $\Q$ for all integers $n\ge 1$.
\end{lemma}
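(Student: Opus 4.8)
The plan is to reduce the irreducibility of $f(x^{s^n})$ to a statement about power residues in the quadratic field $K:=\Q(\sqrt{\D})$ via Capelli's theorems, and then to settle that statement using the explicit fundamental unit supplied by Proposition \ref{Prop:Yokoi}. The case $s=1$ is immediate, since then $f(x^{s^n})=f(x)$, so I would assume $s\ge 2$ from the outset.

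First I would record that $f(x)=x^2-kx-1$ is irreducible over $\Q$: a nontrivial factorization would force $k^2+4$ to be a perfect square $m^2$, but then $(m-k)(m+k)=4$ with $m>k\ge 1$, which has no integer solution. Let $\alpha=(k+\sqrt{k^2+4})/2$ be a root of $f$. By Theorem \ref{Thm:Capelli1}, $f(x^{s^n})$ is reducible over $\Q$ if and only if $x^{s^n}-\alpha$ is reducible over $\Q(\alpha)$, so it suffices to show the latter fails for every $n\ge 1$. I would then identify $\Q(\alpha)$: when $k$ is odd, $k^2+4=\D$, and when $k$ is even, $k^2+4=4\D$, so in all cases $\Q(\alpha)=\Q(\sqrt{\D})=:K$, a real quadratic field in which $\alpha>1$ is a real algebraic integer. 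Since $k\ne 4$ and $\D$ is squarefree, Proposition \ref{Prop:Yokoi} applies and gives that $\alpha=\varepsilon$ is the fundamental unit of $K$ with $\NN(\varepsilon)=-1$.

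Now I would invoke Theorem \ref{Thm:Capelli2} with $c=s^n$: reducibility of $x^{s^n}-\alpha$ over $K$ is equivalent to either (i) $\alpha=\beta^p$ for some prime $p\mid s^n$ and some $\beta\in K$, or (ii) $4\mid s^n$ and $\alpha=-4\beta^4$ for some $\beta\in K$. Case (ii) is impossible because $K\subset\R$ forces $-4\beta^4\le 0<\alpha$. In case (i), $\beta$ is integral over $\O_K$ and lies in $K$, so $\beta\in\O_K$, and $\NN(\beta)^p=\NN(\alpha)=-1$; for $p=2$ this is already impossible, and for odd $p$ it forces $\NN(\beta)=-1$, so $\beta$ is a unit. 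By Dirichlet's unit theorem $\beta=\pm\varepsilon^m$ for some $m\in\Z$, whence $\varepsilon=\alpha=\pm\varepsilon^{mp}$; comparing signs (both $\varepsilon$ and $\varepsilon^{mp}$ are positive) gives $\varepsilon^{mp}=\varepsilon$, and since $\varepsilon>1$ has infinite multiplicative order this forces $mp=1$, impossible for a prime $p$. Hence $x^{s^n}-\alpha$ is irreducible over $K$ for every $n\ge 1$, and the lemma follows from Theorem \ref{Thm:Capelli1}. (The prime divisors of $s^n$ coincide with those of $s$, so nothing is lost in this reduction.)

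The argument is largely routine once the fundamental unit has been pinned down; the one genuinely essential input is that $\alpha$ is the fundamental unit itself and not a proper power of it, which is exactly what the hypothesis $k\ne 4$ guarantees. Indeed, for $k=4$ one has $\alpha=2+\sqrt5=\varphi^3$ with $\varphi$ the golden ratio, so that $f(x^{s^n})$ becomes reducible whenever $3\mid s$; this is why that value must be excluded. Thus the only real care points are verifying the hypotheses of Proposition \ref{Prop:Yokoi} and disposing of the exceptional $-4\beta^4$ branch of Capelli's criterion, both of which are straightforward here.
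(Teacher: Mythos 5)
Your proof is correct and follows essentially the same route as the paper: Capelli's two theorems reduce everything to whether $\varepsilon$ is a $p$th power in $\Q(\sqrt{\D})$, and Proposition \ref{Prop:Yokoi} plus a norm computation rules that out via the infinite order of the fundamental unit. The only differences are cosmetic — you kill the $-4\beta^4$ branch by positivity where the paper uses a norm congruence, and your aside explaining the exclusion $k\ne 4$ via $2+\sqrt{5}=\varphi^3$ is a nice touch not in the paper.
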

\begin{proof}
Since $\D>1$ is squarefree, it follows that $f(x)$ is irreducible over $\Q$, and the trivial case of $s=1$ is true. Suppose then that $s\ge 2$.
   Note that $f(\varepsilon)=0$.   Let $h(x)=x^{s^n}$ and assume, by way of contradiction, that $f(h(x))$ is reducible. Then, by Theorems \ref{Thm:Capelli1} and \ref{Thm:Capelli2} (with $\alpha=\varepsilon$), we have, for some $\beta\in \Q(\varepsilon)$, that either $\varepsilon=\beta^p$ for some prime $p$ dividing $s$, or $\varepsilon=-4\beta^4$ if $s^n\equiv 0 \pmod{4}$. Thus, it is immediate that $\varepsilon=-4\beta^4$ is impossible since $\NN(\varepsilon)=-1$ and $\NN(-4\beta)\equiv 0 \pmod{16}$. Hence, $\varepsilon=\beta^p$ for some prime divisor $p$ of $s$. Then, we see by taking norms that
  \[-1=\NN(\varepsilon)=\NN(\beta)^p,\] which implies that $p\equiv 1 \pmod{2}$ and $\NN(\beta)=-1$, since $\NN(\beta)\in \Z$. Thus, $\beta$ is a unit, and therefore $\beta=\pm \varepsilon^j$ for some $j\in \Z$, since $\varepsilon$ is the fundamental unit of $\Q(\sqrt{\D})$ by Proposition \ref{Prop:Yokoi}. %=\Q(\varepsilon)
   Consequently,
  \[\varepsilon=\beta^p=(\pm 1)^p\varepsilon^{jp},\] which implies that $(\pm 1)^p\varepsilon^{jp-1}=1$, contradicting the fact that $\varepsilon$ has infinite order in the unit group of the ring of algebraic integers of the real quadratic field $\Q(\sqrt{\D})$.
\end{proof}
\begin{lemma}\label{Lem:Basic1}
  Let $k\ge 1$ be an integer, and let $f(x)=x^2-kx-1$. Then $f(x)$ is monogenic if and only if $k\not \equiv 0 \pmod{4}$ and $\D$ is squarefree.
\end{lemma}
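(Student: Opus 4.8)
The plan is to reduce the whole statement to equation \eqref{Eq:Dis-Dis}, or rather to the equivalent observation recorded just after it that $f$ is monogenic precisely when $\Delta(f)=\Delta(K)$, where $K=\Q(\theta)$ and $f(\theta)=0$. First I would record $\Delta(f)$: an elementary computation (or Theorem~\ref{Thm:Swan} with $N=2$, $M=1$, $A=-k$, $B=-1$) gives $\Delta(f)=k^2+4$. Since $k\ge 1$, this is never a perfect square, so $f$ is automatically irreducible over $\Q$ and only the index condition is at issue. Taking $\theta=(k+\sqrt{k^2+4})/2$ as in Proposition~\ref{Prop:Yokoi}, we have $K=\Q(\sqrt{k^2+4})$.

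Next I would split on the parity of $k$ so as to bring in $\D$. If $k$ is odd then $k^2+4=\D$ and $K=\Q(\sqrt{\D})$; if $k=2\ell$ is even then $k^2+4=4(\ell^2+1)=4\D$ and $\theta=\ell+\sqrt{\ell^2+1}$, so still $K=\Q(\sqrt{\D})$. Now I invoke the classical discriminant formula for a quadratic field: for squarefree $m>1$, $\Delta(\Q(\sqrt m))$ equals $m$ if $m\equiv 1\pmod 4$ and $4m$ otherwise. The task is then just to locate the squarefree part of $k^2+4$ and its residue modulo $4$, and compare with $\Delta(f)$ via \eqref{Eq:Dis-Dis}.

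Write $\D=mc^2$ with $m$ squarefree and run three cases. $(i)$ $k$ odd: here $k^2+4\equiv 5\pmod 8$, so $\D\equiv 1\pmod 4$ and $c$ is odd, whence $m\equiv 1\pmod 4$ and $\Delta(K)=m$; then $[\Z_K:\Z[\theta]]^2=\Delta(f)/\Delta(K)=\D/m=c^2$, so $f$ is monogenic iff $c=1$, i.e.\ iff $\D$ is squarefree (and $k\not\equiv 0\pmod 4$ is automatic). $(ii)$ $k\equiv 2\pmod 4$: here $\ell$ is odd, so $\D=\ell^2+1\equiv 2\pmod 8$, the exact power of $2$ dividing $\D$ is one, $c$ is odd, and $m\equiv 2\pmod 4$; thus $\Delta(K)=4m$, while $\Delta(f)=4\D=4mc^2$, giving $[\Z_K:\Z[\theta]]=c$ and again monogenicity iff $\D$ is squarefree. $(iii)$ $k\equiv 0\pmod 4$: here $\ell$ is even, so $\ell^2+1\equiv 1\pmod 4$; writing $\ell^2+1=mc^2$ with $m$ squarefree we get $m\equiv 1\pmod 4$, $\Delta(K)=m$, but $\Delta(f)=4(\ell^2+1)=4mc^2$, so $[\Z_K:\Z[\theta]]=2c\ge 2$ and $f$ is never monogenic. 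Assembling the three cases gives exactly the claimed equivalence.

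These computations are routine; the only delicate point is the $2$-adic bookkeeping in cases $(ii)$ and $(iii)$ — tracking the exact power of $2$ dividing $k^2+4$ and $\D$, hence $m\bmod 4$, as $k$ ranges over the classes $2$ and $0$ modulo $4$ (this is also where the hypothesis $k\not\equiv 0\pmod 4$ enters, and where one sees it cannot be dropped). As a cross-check one can run $f=x^2-kx-1$ through Theorem~\ref{Thm:JKS} directly (so $N=2$, $M=1$, $A=-k$, $B=-1$, $D=-(k^2+4)$): for odd $p\mid k^2+4$ one lands in item~\eqref{JKS:I5}, giving $p\nmid[\Z_K:\Z[\theta]]$ iff $p^2\nmid k^2+4$; and $2\mid\Delta(f)$ only when $k$ is even, in which case item~\eqref{JKS:I2} (with $B_1=0$) reduces to $2\nmid[\Z_K:\Z[\theta]]$ iff $k\equiv 2\pmod 4$. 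This yields the same conclusion.
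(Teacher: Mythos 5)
Your argument is correct, but it takes a genuinely different route from the paper. The paper proves this lemma purely locally, by running $f(x)=x^2-kx-1$ through Theorem \ref{Thm:JKS}: the prime $2$ divides $\Delta(f)=k^2+4$ exactly when $2\mid k$, in which case item \eqref{JKS:I2} (with $A_2=-k/2$, $B_1=0$) gives $2\nmid[\Z_K:\Z[\theta]]$ if and only if $k\not\equiv 0\pmod{4}$, while for odd $p\mid k^2+4$ item \eqref{JKS:I5} gives $p\nmid[\Z_K:\Z[\theta]]$ if and only if $p^2\nmid k^2+4$ --- that is, precisely the computation you relegate to a ``cross-check'' in your final paragraph. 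Your primary argument instead computes $\Delta(K)$ globally from the classical discriminant formula for the quadratic field $K=\Q(\sqrt{\D})$ and reads the index off from \eqref{Eq:Dis-Dis}; the $2$-adic bookkeeping in your three cases is sound (for $k$ odd, $\D\equiv 5\pmod 8$ forces $m\equiv 1\pmod 4$; for $k\equiv 2\pmod 4$, $\D\equiv 2\pmod 8$ forces $m\equiv 2\pmod 4$; and for $k\equiv 0\pmod 4$ the index is $2c\ge 2$, so $f$ is never monogenic). What each approach buys: your global computation is self-contained for quadratics and makes completely explicit why the correct squarefreeness condition is on $\D$ rather than on $k^2+4$ (a point the paper leaves implicit in its final assembly), whereas the paper's local method via Theorem \ref{Thm:JKS} is the one that scales to the degree-$2s^n$ trinomials $f(x^{s^n})$ treated in the rest of the article, which is presumably why the author adopts it already here. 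A minor remark: Definition \ref{Def:D} formally excludes $k=4$, but this is harmless for the lemma since $4\equiv 0\pmod 4$ already rules out monogenicity, and your case $(iii)$ covers it uniformly.
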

\begin{proof}
%Since $f(\pm 1)=\mp k\ne 0$, it follows that
By Lemma \ref{Lem:Irreducible}, $f(x)$ is irreducible over $\Q$. Let $f(\theta)=0$, and let $p$ be a prime divisor of $\Delta(f)=k^2+4$. To examine the monogenicity of $f(x)$, we use Theorem \ref{Thm:JKS}. Suppose first that $p\mid k$. Then $p=2$, and item \eqref{JKS:I2} of Theorem \ref{Thm:JKS} applies. Since $A_2=k/2$ and $B_1=0$, it is easy to see that
 \[\left[\Z_K:\Z[\theta]\right]\not \equiv 0 \pmod{2} \quad\Longleftrightarrow \quad k \not \equiv 0 \pmod{4}.\]
  Suppose next that $p\nmid k$, so that $p\ne 2$. Then, by item \eqref{JKS:I5} of Theorem \ref{Thm:JKS}, we deduce that
 \[\left[\Z_K:\Z[\theta]\right]\not \equiv 0 \pmod{p} \quad\Longleftrightarrow \quad k^2+4 \not \equiv 0 \pmod{p^2},\] which completes the proof.
 \end{proof}

\begin{lemma}\label{Lem:k not squarefree}
Let $k\in \Z$ with $k\ge 1$, and let $p$ be a prime.
\begin{enumerate}
\item \label{I1: p=2} If $p=2$, then $p$ is a $k$-Wall-Sun-Sun prime if and only if $k\equiv 0 \pmod{4}$.
\item \label{I2: p>2} If $p\ge 3$ and $k\equiv 0 \pmod{p^2}$, then $p$ is a $k$-Wall-Sun-Sun prime.
\end{enumerate}
\end{lemma}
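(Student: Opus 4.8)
The plan is to handle the two cases separately using the characterizations of $\pi_k(p)$ and $\pi_k(p^2)$ collected in Theorem~\ref{Thm:Period}, together with the defining congruence \eqref{Def:kWSS}.

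For item~\eqref{I1: p=2}, I would argue directly from Theorem~\ref{Thm:Period}\eqref{I0:p=2}. If $k\equiv 0\pmod 4$, then $\pi_k(2)=2$, so $U_{\pi_k(2)}=U_2=k\equiv 0\pmod 4$, whence $2$ is a $k$-Wall-Sun-Sun prime. Conversely, I check the remaining residue classes of $k$ modulo $4$: if $k\equiv 2\pmod 4$ then $\pi_k(2)=2$ and $U_2=k\not\equiv 0\pmod 4$; if $k$ is odd then $\pi_k(2)=3$ and $U_3=k^2+1$, and one computes $k^2+1\equiv 2\pmod 4$ when $k$ is odd, so $U_3\not\equiv 0\pmod 4$. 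Hence in all cases with $k\not\equiv 0\pmod 4$, $2$ is not a $k$-Wall-Sun-Sun prime. (I could instead phrase this via $\pi_k(2)=\pi_k(4)$ using Theorem~\ref{Thm:Period}\eqref{I0:p=2}, noting that equality of the two periods is equivalent to $2$ being Wall-Sun-Sun; the residue computations are the same.)

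For item~\eqref{I2: p>2}, suppose $p\ge 3$ and $k\equiv 0\pmod{p^2}$. Then in particular $k\equiv 0\pmod p$, so by Theorem~\ref{Thm:Period}\eqref{I-1} we have $\pi_k(p)=2$. Therefore $U_{\pi_k(p)}=U_2=k\equiv 0\pmod{p^2}$ by hypothesis, which is exactly the condition \eqref{Def:kWSS} for $p$ to be a $k$-Wall-Sun-Sun prime. This direction is essentially immediate.

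I do not anticipate a serious obstacle here; the lemma is a bookkeeping consequence of Theorem~\ref{Thm:Period}. The only point requiring a small amount of care is the converse in item~\eqref{I1: p=2}, where one must verify the claim for $k\equiv 2\pmod 4$ and for odd $k$ by explicitly evaluating $U_2$ and $U_3$ and reducing modulo $4$; these are one-line computations. A secondary subtlety is making sure one uses $\pi_k(p)$ and not $\pi_k(p^2)$ in the definition of a $k$-Wall-Sun-Sun prime, but since $\pi_k(p)=2$ in both cases under consideration, the distinction does not cause any difficulty.
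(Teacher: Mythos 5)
Your proposal is correct and follows essentially the same route as the paper: item \eqref{I2: p>2} is proved identically (from $k\equiv 0\pmod p$ one gets $\pi_k(p)=2$ and then $U_2=k\equiv 0\pmod{p^2}$), and for item \eqref{I1: p=2} the paper simply says ``easily verified by direct calculation using Theorem \ref{Thm:Period}'', which is exactly the residue-class check you carry out. Your write-up just makes explicit the computations the paper leaves to the reader.
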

\begin{proof}
  Item \eqref{I1: p=2} is easily verified by direct calculation using Theorem \ref{Thm:Period}. For item \eqref{I2: p>2}, note that $U_2=k$ and $U_3=k^2+1$. Thus, since $k\equiv 0 \pmod{p^2}$, we have that $\pi_k(p)=2$ and $U_2\equiv 0 \pmod{p^2}$.
 \end{proof}

%The next lemma follows from work in \cite{Robinson}.
For the next lemma, we let $\ord_m(*)$ denote the order of $*$ modulo the integer $m\ge 2$.
\begin{lemma}\label{Lem:Order}
Let $k\ge 1$ be an integer, such that $k\ne 4$ and $\D$ is squarefree. Let $p\ge 3$ be a prime such that $\delta=-1$.
     Then
     \begin{enumerate}
     \item \label{Per I:1} $\ord_m(\varepsilon)=\ord_m(\overline{\varepsilon})=\pi_k(m)$ for $m\in \{p,p^2\}$, %and $\ord_{p^2}(\varepsilon)=\pi_k(p^2)$,
     \item \label{Per I:2} $\varepsilon^{p+1}+1\equiv 0\pmod{p}$.
     \end{enumerate}
\end{lemma}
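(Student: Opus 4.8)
The plan is to work inside the finite ring $R_m:=\O/m\O$ for $m\in\{p,p^2\}$, where $\O$ is the ring of integers of $\Q(\sqrt{\D})$, using the identity $(\varepsilon-\overline{\varepsilon})U_n=\varepsilon^n-\overline{\varepsilon}^n$ in $\O$, with $\overline{\varepsilon}=k-\varepsilon$ the conjugate of $\varepsilon$ and $\varepsilon-\overline{\varepsilon}=\sqrt{k^2+4}\in\O$. The key preliminary point is that $\delta=-1$, together with $\D$ squarefree, forces $p\nmid\D$ and hence $p\nmid k^2+4$ (since $k^2+4$ equals $\D$ or $4\D$ and $p$ is odd); therefore $\varepsilon-\overline{\varepsilon}$ is a unit in $R_m$, and since $\varepsilon\overline{\varepsilon}=\NN(\varepsilon)=-1$ by Proposition \ref{Prop:Yokoi}, both $\varepsilon$ and $\overline{\varepsilon}$ are units in $R_m$, so that $\ord_m(\varepsilon)$ and $\ord_m(\overline{\varepsilon})$ make sense.

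Next I would use the standard description of the period: $\pi_k(m)$ is the least integer $T\ge 1$ with $U_T\equiv 0$ and $U_{T+1}\equiv 1\pmod m$, since $\{U_n\}$ satisfies a second-order recurrence and so these two congruences are precisely the condition for the shifted sequence $\{U_{T+n}\}$ to coincide with $\{U_n\}$. Reducing $(\varepsilon-\overline{\varepsilon})U_T=\varepsilon^T-\overline{\varepsilon}^T$ modulo $m$ and cancelling the unit $\varepsilon-\overline{\varepsilon}$ gives $U_T\equiv 0\pmod m\iff\varepsilon^T\equiv\overline{\varepsilon}^T$ in $R_m$; assuming this, $\varepsilon^{T+1}-\overline{\varepsilon}^{T+1}=\varepsilon\cdot\varepsilon^T-\overline{\varepsilon}\cdot\overline{\varepsilon}^T\equiv(\varepsilon-\overline{\varepsilon})\varepsilon^T$, whence $U_{T+1}\equiv\varepsilon^T$ and $U_{T+1}\equiv 1\iff\varepsilon^T\equiv 1$. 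So the two congruences together are equivalent to $\varepsilon^T\equiv\overline{\varepsilon}^T\equiv 1$ in $R_m$, which yields $\pi_k(m)=\lcm\bigl(\ord_m(\varepsilon),\ord_m(\overline{\varepsilon})\bigr)$. To finish part \eqref{Per I:1} it then remains to prove $\ord_m(\varepsilon)=\ord_m(\overline{\varepsilon})$: the nontrivial automorphism $\sigma$ of $\Q(\sqrt{\D})/\Q$ satisfies $\sigma(\varepsilon)=\overline{\varepsilon}$ and maps the ideal $m\O$ to itself (as $m\in\Z$), so it descends to a ring automorphism of $R_m$ carrying $\varepsilon$ to $\overline{\varepsilon}$; automorphisms preserve multiplicative order, so the two orders are equal, and then $\lcm\bigl(\ord_m(\varepsilon),\ord_m(\overline{\varepsilon})\bigr)=\ord_m(\varepsilon)=\ord_m(\overline{\varepsilon})=\pi_k(m)$.

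For part \eqref{Per I:2}: since $p\nmid k^2+4$ and $\delta=-1$, the polynomial $x^2-kx-1$ is irreducible over $\F_p$, so $p$ is inert and $R_p=\O/p\O\cong\F_{p^2}$. The Frobenius $x\mapsto x^p$ is the nontrivial element of $\Gal(\F_{p^2}/\F_p)$ and permutes the two distinct roots of $x^2-kx-1$ in $\F_{p^2}$, namely the images of $\varepsilon$ and $\overline{\varepsilon}=k-\varepsilon$; since the image of $\varepsilon$ is not in $\F_p$, Frobenius must interchange them, i.e. $\varepsilon^p\equiv\overline{\varepsilon}\pmod p$. Multiplying by $\varepsilon$ gives $\varepsilon^{p+1}\equiv\varepsilon\overline{\varepsilon}=-1\pmod p$, that is, $\varepsilon^{p+1}+1\equiv 0\pmod p$, where the congruence is understood in $\O$ (equivalently in $\O/p\O$) rather than in $\Z$, since $\varepsilon^{p+1}+1$ is generally not rational.

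I do not expect a serious obstacle here. The two points that need care are: (i) the legitimacy of reducing the rational closed form for $U_n$ modulo $m$ — this is exactly where the unit property of $\varepsilon-\overline{\varepsilon}$, hence the hypothesis $\delta=-1$, is indispensable; and (ii) keeping track that all the congruences, especially those in part \eqref{Per I:2}, take place in the ring of integers rather than in $\Z$. (One could, if desired, also deduce part \eqref{Per I:2} from $\varepsilon^p\equiv\overline{\varepsilon}$ by identifying $\varepsilon^{p+1}$ with the $\F_{p^2}/\F_p$-norm of the image of $\varepsilon$, which equals the constant term $-1$ of $x^2-kx-1$.)
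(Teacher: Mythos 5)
Your proof is correct and follows essentially the same route as the paper: both arguments reduce $\pi_k(m)$ to $\lcm\bigl(\ord_m(\varepsilon),\ord_m(\overline{\varepsilon})\bigr)$ and then show the two orders coincide, and both establish part \eqref{Per I:2} by showing that $\varepsilon^p\equiv\overline{\varepsilon}\pmod{p}$ and multiplying by $\varepsilon$. The differences are only presentational: where you re-derive the period identity directly from the Binet formula and invoke the Frobenius/Galois automorphism of $\O/p\O\cong\F_{p^2}$, the paper cites Robinson's companion-matrix result for the first step and carries out the equivalent Frobenius computation explicitly via Euler's criterion and the binomial theorem.
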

\begin{proof}
  It follows from \cite{Robinson} that the order modulo an odd integer $m\ge 3$ of the companion matrix $\CC$ for the characteristic polynomial of $\{U_n\}_{n\ge 0}$ is $\pi_k(m)$. The characteristic polynomial of $\{U_n\}_{n\ge 0}$ is $f(x)=x^2-kx-1$, so that
  \[\CC=\left[\begin{array}{cc}
    0&1\\
    1&k
  \end{array}\right].\] Since the eigenvalues of $\CC$ are $\varepsilon$ and $\overline{\varepsilon}$, we conclude that
  \[\ord_m\left(\left[\begin{array}{cc}
    \varepsilon&0\\
    0&\overline{\varepsilon}
  \end{array}\right]\right)=\ord_m(\CC)=\pi_k(m), \quad \mbox{for $m\in \{p,p^2\}$.}
  \] Let $z\ge 1$ be an integer, and suppose that $\varepsilon^z=a+b\sqrt{\D}\in \Q(\sqrt{\D})$. Then
    $\NN(\varepsilon^z)=a^2-\D b^2$. But $\NN(\varepsilon^z)=\NN(\varepsilon)^z=(-1)^z$, so that $a^2-\D b^2=(-1)^z$. Thus,
  \[\overline{\varepsilon}^z=\left(-1/\varepsilon\right)^z=(-1)^z/(a+b\sqrt{\D})=(-1)^z(a-b\sqrt{\D})/(a^2-\D b^2)=a-b\sqrt{\D}.\]
  Hence, since $\delta=-1$, it follows that
  \[\varepsilon^z\equiv 1 \pmod{m} \quad \mbox{if and only if} \quad \overline{\varepsilon}^z\equiv 1 \pmod{m}\]
  for $m\in \{p,p^2\}$,
  which establishes item \eqref{Per I:1}.

  For item \eqref{Per I:2}, since $\delta=-1$,
  we have by Euler's criterion that
   \[\left(\sqrt{k^2+4}\right)^{p+1}=(k^2+4)^{(p-1)/2}(k^2+4)\equiv \delta(k^2+4)\equiv -(k^2+4) \pmod{p},\] which implies %that
   \[\left(\sqrt{k^2+4}\right)^{p}\equiv -\sqrt{k^2+4} \pmod{p}.\]
   Hence,
   \begin{align*}\label{Eq:Expansion}
     \varepsilon^{p+1}&=\left(\frac{k+\sqrt{k^2+4}}{2}\right) \left(\frac{k+\sqrt{k^2+4}}{2}\right)^{p}\\
     &=\left(\frac{k+\sqrt{k^2+4}}{2}\right) \sum_{j=0}^p\binom{p}{j}\left(\frac{k}{2}\right)^j\left(\frac{\sqrt{k^2+4}}{2}\right)^{p-j}\\
     &\equiv \left(\frac{k+\sqrt{k^2+4}}{2}\right)\left(\left(\frac{k}{2}\right)^p+\left(\frac{\sqrt{k^2+4}}{2}\right)^{p}\right) \pmod{p}\\
     &\equiv \left(\frac{k+\sqrt{k^2+4}}{2}\right)\left(\frac{k-\sqrt{k^2+4}}{2}\right) \pmod{p}\\
     &\equiv -1 \pmod{p},
    \end{align*}
   which completes the proof of the lemma.
 \end{proof}

\begin{lemma}\label{Lem:NewEq}
Let $p\ge 3$ be a prime. Then $p$ is a $k$-Wall-Sun-Sun prime if and only if $U_{p-\delta}\equiv 0 \pmod{p^2}$.
\end{lemma}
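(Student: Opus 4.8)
The plan is to show that, for every prime $p\ge 3$, each of the two conditions — that $p$ is a $k$-Wall-Sun-Sun prime, and that $U_{p-\delta}\equiv 0\pmod{p^2}$ — is equivalent to the single condition $\pi_k(p^2)=\pi_k(p)$ (this is the analogue for $\{U_n\}$ of the middle equivalence in Theorem~\ref{Thm:B}). Write $\pi:=\pi_k(p)$, and recall that the $n$-th power of the companion matrix $\CC$ of $f(x)=x^2-kx-1$ has off-diagonal entries $U_n$ and diagonal entries $U_{n-1},U_{n+1}$, and that $\ord_m(\CC)=\pi_k(m)$ for odd $m$ (as used in the proof of Lemma~\ref{Lem:Order}). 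For the first equivalence: $\CC^{\pi}\equiv I\pmod{p}$ gives $U_{\pi}\equiv 0$ and $U_{\pi-1}\equiv 1\pmod{p}$; if in addition $p^2\mid U_{\pi}$, then, since $\pi$ is even (Theorem~\ref{Thm:Period}\eqref{I:even}) so that $\det\CC^{\pi}=(-1)^{\pi}=1$, the identity $U_{\pi-1}U_{\pi+1}-U_{\pi}^{2}=1$ (together with $U_{\pi+1}\equiv U_{\pi-1}\pmod{p^2}$) forces $U_{\pi-1}^{2}\equiv 1\pmod{p^2}$, hence $U_{\pi-1}\equiv 1\pmod{p^2}$ (as $U_{\pi-1}\equiv 1\pmod{p}$ and $p$ is odd), and then $U_{\pi+1}=kU_{\pi}+U_{\pi-1}\equiv 1\pmod{p^2}$; so $\CC^{\pi}\equiv I\pmod{p^2}$, i.e.\ $\pi_k(p^2)\mid\pi$, which together with $\pi_k(p)\mid\pi_k(p^2)$ gives $\pi_k(p^2)=\pi$. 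The converse is immediate, so $p$ is a $k$-Wall-Sun-Sun prime if and only if $\pi_k(p^2)=\pi$.

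Now suppose $\delta=\pm 1$, i.e.\ $p\nmid\D$. Then $f$ has distinct roots modulo $p$ — the reductions of $\varepsilon$ and $\overline\varepsilon$ — so $\CC$ is diagonalizable over $\Z_K/p^2\Z_K$ with eigenvalues the images of $\varepsilon,\overline\varepsilon$; since the nontrivial automorphism of $\Z_K$ fixes $p^2\Z_K$ and sends $\varepsilon$ to $\overline\varepsilon$, these eigenvalues have equal order in $(\Z_K/p^2\Z_K)^{\times}$, whence $\pi_k(p^2)=\ord_{p^2}(\varepsilon)$ (when $\delta=-1$ this is Lemma~\ref{Lem:Order}\eqref{Per I:1}). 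Because $\NN(\varepsilon)=\varepsilon\overline\varepsilon=-1$ (Proposition~\ref{Prop:Yokoi}), we have $\overline\varepsilon=-\varepsilon^{-1}$, and since $(\varepsilon-\overline\varepsilon)^{2}=k^2+4$ is prime to $p$, the element $\varepsilon-\overline\varepsilon$ is a unit modulo $p^2$ in $\Z_K$; hence for every \emph{even} $n$,
\[p^2\mid U_n\iff\varepsilon^{n}\equiv\overline\varepsilon^{n}\pmod{p^2\Z_K}\iff\varepsilon^{2n}\equiv 1\pmod{p^2\Z_K}\iff\pi_k(p^2)\mid 2n.\]
Both $\pi$ and $p-\delta$ are even, and $\pi\mid 2(p-\delta)$ — this is $\pi\mid p-1$ when $\delta=1$ (Theorem~\ref{Thm:Period}\eqref{I1:QR}) and $\pi\mid 2(p+1)$ when $\delta=-1$ (Theorem~\ref{Thm:Period}\eqref{I2:QNR}). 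Since $\pi_k(p^2)\in\{\pi,p\pi\}$ (Theorem~\ref{Thm:Period}\eqref{I4:R}) and $p\pi\nmid 2(p-\delta)$ (because $p$ is odd and $p\nmid p-\delta$), taking $n=p-\delta$ above gives $p^2\mid U_{p-\delta}\iff\pi_k(p^2)=\pi$. With the first paragraph this proves the lemma when $\delta=\pm 1$.

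It remains to treat $\delta=0$, i.e.\ $p\mid\D$. Then $p\nmid k$ (else $p\mid 4$), $f(x)\equiv(x-r)^{2}\pmod{p}$ with $r^{2}\equiv -1\pmod{p}$, and so $p\equiv 1\pmod{4}$, $p\ge 5$. A short local computation at a prime $\p\mid p$ of $\Z_K$ (binomial expansion of $\varepsilon^{n}-\overline\varepsilon^{n}$ about the double root $r$, using $v_{\p}(\varepsilon-\overline\varepsilon)\ge 1$ and $p\ge 5$) shows that $p\mid U_n\iff p\mid n$ and that $v_p(U_p)=1$; the first fact says the rank of apparition modulo $p$ is $p$, whence $p\mid\mid\pi_k(p)$ because the $p$-part of the order of any element of $\mathrm{GL}_2(\F_p)$ is at most $p$. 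Combining this with the factorization
\[\varepsilon^{pm}-\overline\varepsilon^{pm}=(\varepsilon^{p}-\overline\varepsilon^{p})\sum_{i=0}^{m-1}(\varepsilon^{p})^{i}(\overline\varepsilon^{p})^{m-1-i},\]
whose second factor is a $\p$-unit when $p\nmid m$ (as $\varepsilon^{p}\equiv\overline\varepsilon^{p}\pmod{\p}$), gives $v_p(U_{\pi_k(p)})=v_p(U_p)=1$, so neither $p^2\mid U_p=U_{p-\delta}$ nor $p^2\mid U_{\pi_k(p)}$; thus both sides of the equivalence are false. I expect the $\delta=\pm 1$ step to be the real obstacle — the delicate points are the short verification there (done uniformly whether $p$ splits or is inert, using the parity of $n$ to kill the sign in $\overline\varepsilon=-\varepsilon^{-1}$) and the valuation bookkeeping just sketched; the rest is assembled from Theorem~\ref{Thm:Period}, Lemma~\ref{Lem:Order}, and Proposition~\ref{Prop:Yokoi}.
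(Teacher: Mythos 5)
Your proof is correct, and it reaches the conclusion by a noticeably different route than the paper's. The paper works entirely with the Binet formula and Lemma \ref{Lem:Order}: it rewrites $U_{\pi_k(p)}$ as $(\varepsilon^{\pi_k(p)}-1)(\varepsilon^{\pi_k(p)}+1)/\bigl(\varepsilon^{\pi_k(p)}(\varepsilon+1/\varepsilon)\bigr)$, uses $\varepsilon^{\pi_k(p)}+1\equiv 2\pmod{p}$ to convert $p^2\mid U_{\pi_k(p)}$ into $\varepsilon^{\pi_k(p)}\equiv 1\pmod{p^2}$, and then passes to $U_{p-\delta}$ through the divisibility $\pi_k(p)\mid 2(p+1)$, with a telescoping-sum argument for the converse; it writes out only $\delta=-1$ and declares $\delta=1$ similar. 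You instead pivot through the statement $\pi_k(p)=\pi_k(p^2)$, proving that $p$ is a $k$-Wall-Sun-Sun prime if and only if $\pi_k(p)=\pi_k(p^2)$ from the companion-matrix determinant identity $U_{\pi-1}U_{\pi+1}-U_{\pi}^{2}=1$, and then reduce $p^2\mid U_{p-\delta}$ to $\pi_k(p^2)\mid 2(p-\delta)$, finishing with the dichotomy $\pi_k(p^2)\in\{\pi_k(p),\,p\pi_k(p)\}$ from Theorem \ref{Thm:Period}\eqref{I4:R} and the observation $p\nmid 2(p-\delta)$. What this buys you: a uniform treatment of $\delta=1$ and $\delta=-1$; an independent, elementary proof of the period-equality criterion that the paper only obtains afterwards (in Lemma \ref{Lem:Equivalent Conditions}, by citing Bouazzaoui's Theorem \ref{Thm:B}); and a disposal of the ramified case $\delta=0$, which the lemma as stated includes but the paper's proof silently omits (harmlessly, since the lemma is only invoked later under $\gcd(p,\D)=1$). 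The cost is length, and your $\delta=0$ paragraph is only sketched (the valuation claims $v_p(U_p)=1$ and $p\mid\mid\pi_k(p)$), but the sketch is correct and routine to complete via the binomial expansion you indicate.
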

\begin{proof} We provide details only for $\delta=-1$ since the proof is similar when $\delta=1$.

  Suppose first that $p$ is a $k$-Wall-Sun-Sun prime. Then
  \begin{equation}\label{Eq:kWSS}
  U_{\pi_k(p)}\equiv 0\pmod{p^2}.
  \end{equation} Using the Binet-formula representation and item \eqref{I:even} of Theorem \ref{Thm:Period}, we have that
   \begin{equation}\label{Eq:Upi}
  U_{\pi_k(p)}=\frac{\varepsilon^{\pi_k(p)}-(-1/\varepsilon)^{\pi_k(p)}}{\varepsilon+1/\varepsilon}
  =\frac{(\varepsilon^{\pi_k(p)}-1)(\varepsilon^{\pi_k(p)}+1)}{\varepsilon^{\pi_k(p)}(\varepsilon+1/\varepsilon)}.
  \end{equation}
   By Lemma \ref{Lem:Order},  $\varepsilon^{\pi_k(p)}\equiv 1 \pmod{p}$, so that $\varepsilon^{\pi_k(p)}+1\equiv 2\pmod{p}$. Hence,
  \[\varepsilon^{\pi_k(p)}-1\equiv 0 \pmod{p^2}\] by \eqref{Eq:kWSS}. Thus,
  if $\delta=-1$, then
  \begin{equation}\label{Eq:Udelta}
  U_{p-\delta}=\frac{\varepsilon^{2(p+1)-1}-1}{\varepsilon^{p+1}(\varepsilon+1/\varepsilon)}\equiv 0 \pmod{p^2},
  \end{equation}
  by item \eqref{I2:QNR} of Theorem \ref{Thm:Period}, which completes the proof in this direction.

  Conversely, with $\delta=-1$, suppose that \eqref{Eq:Udelta} holds.
     From item \eqref{I2:QNR} of Theorem \ref{Thm:Period}, we can write $2(p+1)=z\pi_k(p)$. Then
  \[\varepsilon^{2(p+1)}-1=(\varepsilon^{\pi_k(p)}-1)S \equiv 0\pmod{p^2},\]
  where
  \[S=(\varepsilon^{\pi_k(p)})^{z-1}+(\varepsilon^{\pi_k(p)})^{z-2}+\cdots +\varepsilon^{\pi_k(p)}+1\equiv z\not \equiv 0\pmod{p},\] since $\varepsilon^{\pi_k(p)}\equiv 1\pmod{p}$ by Lemma \ref{Lem:Order}. Thus, $\varepsilon^{\pi_k(p)}-1\equiv 0\pmod{p^2}$, which implies that $U_{\pi_k(p)}\equiv 0 \pmod{p^2}$ by \eqref{Eq:Upi}, completing the proof of the lemma.
\end{proof}
  Note that $\D$ and $4\D$ are fundamental discriminants when $k\equiv 1 \pmod{2}$ and $k\equiv 0 \pmod{2}$, respectively. The next lemma then follows from Theorem \ref{Thm:B} and Lemma \ref{Lem:NewEq}.
\begin{lemma}\label{Lem:Equivalent Conditions}
Let $k\in \Z$ with $k\ge 1$, and let $p$ be a prime such that $p\ge 3$ and $\gcd(\D h,p)=1$, where $h$ is the class number of $\Q(\sqrt{\D})$. Then
\begin{align*}
\mbox{$p$ is a $k$-Wall-Sun-Sun prime} & \quad \Longleftrightarrow \quad \varepsilon^{p^r-1}-1\equiv 0 \pmod{p^2},\\ %U_{\pi_k(p)}\equiv 0 \pmod{p^2}\\
 & \quad \Longleftrightarrow \quad \pi_k(p)=\pi_k(p^2),\\
 & \quad \Longleftrightarrow \quad U_{p-\delta}\equiv 0 \pmod{p^2},
 %&\quad \Longleftrightarrow \quad \varepsilon^{p^r-1}-1\equiv 0 \pmod{p^2},
 \end{align*}
 where %$\varepsilon$ is the fundamental unit of $\Q(\sqrt{\D})$ and
 $r$ is the residual degree of $p$ in $\Q(\sqrt{\D})$.
\end{lemma}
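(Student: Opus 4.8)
The plan is to deduce the lemma directly from Bouazzaoui's Theorem~\ref{Thm:B} together with Lemma~\ref{Lem:NewEq}, as anticipated in the remark preceding the statement; essentially all of the work is a careful matching of hypotheses and notation. First I would record the setup. Under the standing assumptions that make Proposition~\ref{Prop:Yokoi} and the preceding remark applicable (in particular $\D$ squarefree, $k\ne 4$, and $k\not\equiv 0\pmod 4$), Proposition~\ref{Prop:Yokoi} gives that $\varepsilon=(k+\sqrt{k^2+4})/2$ is the fundamental unit of $K:=\Q(\sqrt{\D})$ with $\NN_{K/\Q}(\varepsilon)=\varepsilon\overline{\varepsilon}=-1$, and one computes at once that $\varepsilon+\overline{\varepsilon}=k$ and $(\varepsilon-\overline{\varepsilon})^2=k^2+4$. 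Let $d$ be the fundamental discriminant of $K$; by the remark, $d=\D$ when $k$ is odd and $d=4\D$ when $k$ is even, so in either case $(\varepsilon-\overline{\varepsilon})^2=k^2+4\in\{\D,4\D\}$. Since $p\ge 3$ the factor $4$ is irrelevant, whence $p\mid(\varepsilon-\overline{\varepsilon})^2\iff p\mid\D$, and the class number $h_d$ of $K=\Q(\sqrt{d})$ is simply $h$. Consequently the hypothesis $p\nmid(\varepsilon-\overline{\varepsilon})^2h_d$ of Theorem~\ref{Thm:B} is precisely the hypothesis $\gcd(\D h,p)=1$ assumed here, and moreover $\left(\frac{d}{p}\right)=\left(\frac{\D}{p}\right)=\delta$.

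Next I would identify the auxiliary Lucas sequence $\{F_n\}_{n\ge 0}$ occurring in Theorem~\ref{Thm:B}. Substituting $\varepsilon+\overline{\varepsilon}=k$ and $\NN_{K/\Q}(\varepsilon)=-1$ into its defining recurrence yields $F_0=0$, $F_1=1$, and $F_n=kF_{n-1}+F_{n-2}$ for $n\ge 2$, so comparison with \eqref{Eq:Lucas} gives $F_n=U_n$ for all $n$. Therefore the period lengths $\pi(p)$ and $\pi(p^2)$ of $\{F_n\}$ in Theorem~\ref{Thm:B} are exactly $\pi_k(p)$ and $\pi_k(p^2)$, and the condition $F_{p-\left(\frac{d}{p}\right)}\equiv 0\pmod{p^2}$ there reads $U_{p-\delta}\equiv 0\pmod{p^2}$.

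It then remains only to string the equivalences together. By Theorem~\ref{Thm:B}, the three conditions $\varepsilon^{p^r-1}-1\equiv 0\pmod{p^2}$ (with $r$ the residual degree of $p$ in $K$), $\pi_k(p)=\pi_k(p^2)$, and $U_{p-\delta}\equiv 0\pmod{p^2}$ are mutually equivalent; and by Lemma~\ref{Lem:NewEq} the last of these holds if and only if $p$ is a $k$-Wall-Sun-Sun prime. Concatenating these yields the lemma. I do not expect a genuine obstacle here: the argument is bookkeeping, and the only point needing mild care is correctly pinning down the fundamental discriminant $d$ in the even- and odd-$k$ cases, so that Theorem~\ref{Thm:B} applies with the stated hypothesis and so that its Legendre symbol $\left(\frac{d}{p}\right)$ is recognized as $\delta$.
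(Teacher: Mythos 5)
Your proposal is correct and follows exactly the route the paper takes: the paper's own ``proof'' is the one-sentence remark that $\D$ (resp.\ $4\D$) is the fundamental discriminant for $k$ odd (resp.\ even) and that the lemma then follows from Theorem~\ref{Thm:B} and Lemma~\ref{Lem:NewEq}. You have simply supplied the bookkeeping the paper leaves implicit (identifying $F_n=U_n$, $\left(\frac{d}{p}\right)=\delta$, and $p\nmid(\varepsilon-\overline{\varepsilon})^2h_d\iff\gcd(\D h,p)=1$ for $p\ge 3$), all of which checks out.
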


\begin{lemma}\label{Lem:Main1}
  Let $k\in \Z$, such that $k\ge 1$, $k\not\equiv 0\pmod{4}$ and $\D$ is squarefree. Let $p$ be a prime such that $p\ge 3$, $p\nmid k$
  and $\gcd(p,\D h)=1$,  where $h$ is the class number of $\Q(\sqrt{\D})$.  If $\delta=-1$, then the following conditions are equivalent:
  \begin{enumerate}
    \item \label{Lem:Main1 I1} $p$ is a $k$-Wall-Sun-Sun prime,
    \item \label{Lem:Main1 I2} $\varepsilon^{2p^m}-k\varepsilon^{p^m}-1 \equiv 0\pmod{p^2}$ for all integers $m\ge 1$,
    \item \label{Lem:Main1 I3} $\varepsilon^{2p^m}-k\varepsilon^{p^m}-1 \equiv 0\pmod{p^2}$ for some integer $m\ge 1$.
  \end{enumerate}
  \end{lemma}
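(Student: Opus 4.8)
The plan is to observe that $\varepsilon$ is a root of $f(x)=x^2-kx-1$, so the expression $\varepsilon^{2p^m}-k\varepsilon^{p^m}-1$ equals $f(\varepsilon^{p^m})$, and to rewrite $f(\varepsilon^{p^m})$ in a form whose divisibility by $p^2$ is controlled by Lemma \ref{Lem:Order}. Since $f(\varepsilon)=0$ gives $\varepsilon^2=k\varepsilon+1$, one has the factorization $f(\varepsilon^{p^m}) = (\varepsilon^{p^m}-\varepsilon)(\varepsilon^{p^m}-\overline{\varepsilon})$, where $\overline{\varepsilon}=-1/\varepsilon$ is the conjugate root; equivalently $f(\varepsilon^{p^m}) = (\varepsilon^{p^m}-\varepsilon)(\varepsilon^{p^m}+1/\varepsilon)$. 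Working modulo $p$ (that is, in $\Z_K/p\Z_K$, recalling $\gcd(p,\D)=1$ so $p$ is unramified and $\varepsilon$ is a unit there), Lemma \ref{Lem:Order}\eqref{Per I:2} gives $\varepsilon^{p+1}\equiv -1$, hence $\varepsilon^{p}\equiv -1/\varepsilon = \overline{\varepsilon}\pmod{p}$ and also $\varepsilon^{p^m}\equiv \overline{\varepsilon}^{\,p^{m-1}}\pmod p$; iterating the relation $\varepsilon^p\equiv\overline\varepsilon$, $\overline\varepsilon^{\,p}\equiv\varepsilon$ shows $\varepsilon^{p^m}\equiv\varepsilon\pmod p$ when $m$ is even and $\varepsilon^{p^m}\equiv\overline\varepsilon\pmod p$ when $m$ is odd. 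In either case exactly one of the two factors $\varepsilon^{p^m}-\varepsilon$, $\varepsilon^{p^m}-\overline\varepsilon$ is $\equiv 0\pmod p$ and the other is a unit mod $p$ (they differ by $\varepsilon-\overline\varepsilon=\sqrt{\D}$, which is a unit mod $p$ since $\gcd(p,\D)=1$). Therefore $f(\varepsilon^{p^m})\equiv 0\pmod{p^2}$ if and only if the relevant single factor is $\equiv 0\pmod{p^2}$, i.e. if and only if $\varepsilon^{p^m}\equiv\varepsilon\pmod{p^2}$ (for $m$ even) or $\varepsilon^{p^m}\equiv\overline\varepsilon\pmod{p^2}$ (for $m$ odd).

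Next I would translate these congruences into a statement about $\ord_{p^2}(\varepsilon)$ versus $\ord_{p}(\varepsilon)$. For $m$ even, $\varepsilon^{p^m}\equiv\varepsilon\pmod{p^2}$ says $\varepsilon^{p^m-1}\equiv 1\pmod{p^2}$; for $m$ odd, multiplying $\varepsilon^{p^m}\equiv\overline\varepsilon = -\varepsilon^{-1}\pmod{p^2}$ by $\varepsilon$ gives $\varepsilon^{p^m+1}\equiv -1$, and squaring, $\varepsilon^{2p^m+2}\equiv 1\pmod{p^2}$; in both cases the exponent is divisible by $\pi_k(p)=\ord_p(\varepsilon)$ by Lemma \ref{Lem:Order}\eqref{Per I:1} together with item \eqref{I2:QNR} of Theorem \ref{Thm:Period} (note $\ord_p(\varepsilon)\mid 2(p+1)$ and $p^m\pm1$ is the relevant multiple after reducing the $p$-power exponent mod $\pi_k(p)$, since $p^m\equiv(-1)^m\pmod{\pi_k(p)}$ as $\pi_k(p)\mid 2(p+1)\mid p^2-1$ forces $p^2\equiv 1$). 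The upshot is that, for every $m\ge1$, $f(\varepsilon^{p^m})\equiv0\pmod{p^2}$ is equivalent to $\ord_{p^2}(\varepsilon)=\ord_{p}(\varepsilon)$, i.e. to $\pi_k(p)=\pi_k(p^2)$ by Lemma \ref{Lem:Order}\eqref{Per I:1}. By Lemma \ref{Lem:Equivalent Conditions}, $\pi_k(p)=\pi_k(p^2)$ is equivalent to $p$ being a $k$-Wall-Sun-Sun prime. This simultaneously yields \eqref{Lem:Main1 I1}$\Rightarrow$\eqref{Lem:Main1 I2} (the condition does not depend on $m$) and \eqref{Lem:Main1 I3}$\Rightarrow$\eqref{Lem:Main1 I1}, and \eqref{Lem:Main1 I2}$\Rightarrow$\eqref{Lem:Main1 I3} is trivial.

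The main obstacle I anticipate is the careful bookkeeping of the $p$-power exponents modulo $\pi_k(p)$ in the two parity cases — in particular verifying that reducing $p^m$ modulo $\pi_k(p)$ (using $\pi_k(p)\mid 2(p+1)$ and $2(p+1)\mid 2(p^2-1)$, hence $p^2\equiv1\pmod{\pi_k(p)}$) genuinely turns $\varepsilon^{p^m}\equiv1\pmod{p^2}$-type statements into $\varepsilon^{\pm1\cdot\text{(multiple of }\pi_k(p))}$ statements, and that the "unit mod $p$" factor in $f(\varepsilon^{p^m})=(\varepsilon^{p^m}-\varepsilon)(\varepsilon^{p^m}-\overline\varepsilon)$ never secretly becomes divisible by $p$. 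This last point is exactly where the hypothesis $\gcd(p,\D)=1$ (so $\sqrt{\D}$ is a unit mod $p$ and $p$ is unramified) is essential; the hypotheses $p\nmid k$ and $\gcd(p,h)=1$ are needed only to invoke Lemma \ref{Lem:Equivalent Conditions} at the end.
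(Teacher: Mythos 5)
Your factorization strategy is sound and, in one respect, genuinely cleaner than the paper's argument. The paper proves \eqref{Lem:Main1 I3}$\Rightarrow$\eqref{Lem:Main1 I1} by reducing $p^m$ modulo $2p(p+1)$, invoking Hensel's lemma to say that $\varepsilon^{p^e}$ ($e\in\{1,2\}$) must be congruent modulo $p^2$ to one of the two roots $\varepsilon,\overline{\varepsilon}$, and then ruling out the wrong root in each case --- one branch via a contradiction with $\gcd(p,\D)=1$, the other via showing $\pi_k(p)\in\{2,4\}$ is impossible, which is where the paper uses $p\nmid k$. Your identity $f(\varepsilon^{p^m})=(\varepsilon^{p^m}-\varepsilon)(\varepsilon^{p^m}-\overline{\varepsilon})$, combined with the mod-$p$ computation $\varepsilon^{p^m}\equiv\varepsilon$ or $\overline{\varepsilon}\pmod{p}$ according to the parity of $m$, pins down the vanishing factor from the outset (the other factor is a unit because it is congruent to $\pm\sqrt{k^2+4}$ and $p\nmid \D$), so that entire case analysis disappears and the hypothesis $p\nmid k$ is not even needed for this step.

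Three points need repair before this is a complete proof. First, the congruence $p^m\equiv(-1)^m\pmod{\pi_k(p)}$ is false in general: $\pi_k(p)\mid 2(p+1)$ does not give $p\equiv -1\pmod{\pi_k(p)}$ (for $k=1$, $p=3$ one has $\delta=-1$, $\pi_1(3)=8$, and $3\not\equiv -1\pmod{8}$). What is true, and what you actually need, is $p^2\equiv 1\pmod{\pi_k(p)}$, hence $\pi_k(p)\mid p^m-1$ for even $m$ and $\pi_k(p)\mid 2(p^m+1)$ for odd $m$. Second, and more substantively, the implication from $f(\varepsilon^{p^m})\equiv 0\pmod{p^2}$ to $\ord_{p^2}(\varepsilon)=\ord_{p}(\varepsilon)$ is asserted but not justified: the congruences $\varepsilon^{p^m-1}\equiv 1\pmod{p^2}$, respectively $\varepsilon^{2(p^m+1)}\equiv 1\pmod{p^2}$, only show that $\ord_{p^2}(\varepsilon)$ divides an exponent coprime to $p$. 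To conclude equality of orders you must also invoke item \eqref{I4:R} of Theorem \ref{Thm:Period} together with item \eqref{Per I:1} of Lemma \ref{Lem:Order}, so that $\ord_{p^2}(\varepsilon)\in\{\pi_k(p),\,p\,\pi_k(p)\}$ and the coprimality to $p$ forces the first alternative. Third, in the converse direction for odd $m$ your squaring step loses a sign: from $\varepsilon^{2(p^m+1)}\equiv 1\pmod{p^2}$ you must recover $\varepsilon^{p^m+1}\equiv -1\pmod{p^2}$, which follows because $\varepsilon^{p^m+1}-1\equiv -2\not\equiv 0\pmod{p}$ (the analogue of the paper's observation \eqref{Eq:gcd}). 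With these gaps filled, your argument is correct and arguably tidier than the published one.
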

\begin{proof}
First, observe that item \eqref{Lem:Main1 I2} clearly implies item \eqref{Lem:Main1 I3}.

  We show next that item \eqref{Lem:Main1 I1} implies item \eqref{Lem:Main1 I2}.
       Since $\delta=-1$, we see from Theorem \ref{Thm:Period} that $2(p+1)\equiv 0 \pmod{\pi_k(p)}$. Thus, since $p$ is a $k$-Wall-Sun-Sun prime, it follows from Lemma \ref{Lem:Equivalent Conditions} that $\pi_k(p)=\pi_k(p^2)$. Hence, $2(p+1)\equiv 0 \pmod{\pi_k(p^2)}$ and
   \begin{equation}\label{Eq:power}
   \varepsilon^{2(p+1)}-1\equiv (\varepsilon^{p+1}-1)(\varepsilon^{p+1}+1)\equiv 0 \pmod{p^2}
   \end{equation} by part \eqref{Per I:1} of Lemma \ref{Lem:Order}.
    Since
    \begin{equation}\label{Eq:gcd}
    \gcd(\varepsilon^{p+1}-1,\varepsilon^{p+1}+1)\not \equiv 0 \pmod{p},
     \end{equation} we conclude from \eqref{Eq:power} and part \eqref{Per I:2} of Lemma \ref{Lem:Order} that
       \begin{equation}\label{Eq:Power1}
   \varepsilon^{p}\equiv -\varepsilon^{-1} \pmod{p^2}.
   \end{equation}
    Let $m\ge 1$ be an integer. Since
   \[p^m\equiv \left\{\begin{array}{cl}
   1 \pmod{2(p+1)}& \mbox{if $m\equiv 0 \pmod{2}$,}\\
   p \pmod{2(p+1)}& \mbox{if $m\equiv 1 \pmod{2}$,}
   \end{array}\right.\]
    we have from item \eqref{Per I:1} of Lemma \ref{Lem:Order} that %it follows from \eqref{Eq:Power} that
   \[\varepsilon^{2p^m}-k\varepsilon^{p^m}-1\equiv \left\{\begin{array}{cl}
   \varepsilon^2-k\varepsilon-1 \pmod{p^2}& \mbox{if $m\equiv 0 \pmod{2}$,}\\
   \varepsilon^{2p}-k\varepsilon^{p}-1 \pmod{p^2}& \mbox{if $m\equiv 1 \pmod{2}$.}
   \end{array}\right.
   \]
   Using \eqref{Eq:Power1}, we deduce that
   \[\varepsilon^{2p}-k\varepsilon^{p}-1\equiv
   \dfrac{-(\varepsilon^2-k\varepsilon-1)}{\varepsilon^2} \pmod{p^2},\]
      which completes the proof in this direction since $\varepsilon^2-k\varepsilon-1=0$.

   Finally, we show that item \eqref{Lem:Main1 I3} implies  item \eqref{Lem:Main1 I1}.
   By items \eqref{I2:QNR} and \eqref{I4:R} of Theorem \ref{Thm:Period}, we deduce that $2p(p+1)\equiv 0 \pmod{\pi_k(p^2)}$. Let $m\ge 1$ be any integer. Then,
   since
   \[p^m\equiv \left\{\begin{array}{cl}
   p \pmod{2p(p+1)}& \mbox{if $m\equiv 1 \pmod{2}$,}\\
   p^2 \pmod{2p(p+1)}& \mbox{if $m\equiv 0 \pmod{2}$,}
   \end{array}\right.\]
    it follows from item \eqref{Per I:1} of Lemma \ref{Lem:Order} that
   \[\varepsilon^{2p^m}-k\varepsilon^{p^m}-1\equiv \left\{\begin{array}{cl}
   \varepsilon^{2p}-k\varepsilon^p-1 \pmod{p^2}& \mbox{if $m\equiv 1 \pmod{2}$,}\\
   \varepsilon^{2p^2}-k\varepsilon^{p^2}-1 \pmod{p^2}& \mbox{if $m\equiv 0 \pmod{2}$.}
   \end{array}\right.
   \] By assumption, we have that %either To complete the proof, we must show that
   \begin{equation}\label{Eq:2Possibilities}
   \mbox{either} \quad  \varepsilon^{2p}-k\varepsilon^p-1 \equiv 0 \pmod{p^2} \quad \mbox{or} \quad \varepsilon^{2p^2}-k\varepsilon^{p^2}-1 \equiv 0 \pmod{p^2}.
   \end{equation}

 The zeros of $f(x)=x^2-kx-1$ in the ring $(\Z/p\Z)[\D]$ are $\varepsilon$ and $\overline{\varepsilon}=-1/\varepsilon$. By Hensel, these zeros lift to the zeros $\varepsilon$ and $\overline{\varepsilon}$ of $f(x)$ in $(\Z/p^2\Z)[\D]$.
   Hence, if the first possibility of \eqref{Eq:2Possibilities} is true, then
   \begin{equation}\label{Eq:Poss}
   \varepsilon^p\equiv \varepsilon \pmod{p^2}\quad \mbox{or}\quad \varepsilon^p\equiv -1/\varepsilon \pmod{p^2}.
   \end{equation}
   Suppose the first possibility of \eqref{Eq:Poss} is true. Then, by item \eqref{Per I:2} of Lemma \ref{Lem:Order}, we have that
     \[-1\equiv \varepsilon^{p+1}\equiv \varepsilon^p\varepsilon\equiv \varepsilon^2 \equiv \frac{k^2+2+k\sqrt{k^2+4}}{2} \pmod{p}.\] Thus, \[\frac{k^2+4+k\sqrt{k^2+4}}{2}\equiv 0 \pmod{p},\] which implies that $k^2+4\equiv 0 \pmod{p}$, contradicting the fact that $\gcd(p,\D)=1$.  Thus, the second possibility of \eqref{Eq:Poss} holds, which implies that $\varepsilon^{p+1}\equiv -1 \pmod{p^2}$. Since $\delta=-1$, the residual degree of $p$ is $r=2$. Hence,
   \[\varepsilon^{p^r-1}-1\equiv \varepsilon^{p^2-1}-1\equiv \left(\varepsilon^{p+1}\right)^{p-1}-1\equiv (-1)^{p-1}-1\equiv 0 \pmod{p^2},\] which implies that $p$ is a $k$-Wall-Sun-Sun prime by Lemma \ref{Lem:Equivalent Conditions}.

    Suppose now that the second possibility,
   \begin{equation}\label{Eq:Zeros2}
   \varepsilon^{2p^2}-k\varepsilon^{p^2}-1 \equiv 0 \pmod{p^2},
   \end{equation}
   of \eqref{Eq:2Possibilities} is true. Then
   \begin{equation}\label{Eq:Poss2}
   \varepsilon^{p^2}\equiv \varepsilon \pmod{p^2}\quad \mbox{or}\quad \varepsilon^{p^2}\equiv -1/\varepsilon \pmod{p^2}.
   \end{equation}
      If the second possibility in \eqref{Eq:Poss2} holds, then $\varepsilon^{2(p^2+1)}\equiv 1 \pmod{p}$. Since $\ord_p(\varepsilon)=\pi_k(p)$ by item       \eqref{Per I:1} of Lemma \ref{Lem:Order}, it follows that $2p^2+2\equiv 0 \pmod{\pi_k(p)}$. By item \eqref{I2:QNR} of Theorem \ref{Thm:Period}, we have that $2p+2\equiv 0\pmod{\pi_k(p)}$. Hence,
      \[\pi_k(p) \quad \mbox{divides}\quad (2p^2+2)-(p-1)(2p+2)=4,\] so that $\pi_k(p)\in \{2,4\}$. Recall that $p\nmid k$ by hypothesis. Thus, $\pi_k(p)\ne 2$ by item \eqref{I-1} of Theorem \ref{Thm:Period}. If $\pi_k(p)=4$, then
      \begin{equation*}\label{Eq:p divisiblity}
      U_4=k(k^2+2)\equiv 0 \pmod{p}\quad \mbox{and}\quad U_5=k^2(k^2+3)+1\equiv 1\pmod{p}.
      \end{equation*}
      Hence, it follows that
      \[p\quad \mbox{divides}\quad (k^2+3)-(k^2+2)=1,\]
      which is impossible. Therefore, the first possibility in \eqref{Eq:Poss2} holds, which implies that $\varepsilon^{p^2-1}-1\equiv 0 \pmod{p^2}$, and the proof is complete by Lemma \ref{Lem:Equivalent Conditions}.
   \end{proof}
\begin{cor}\label{Cor:Main1}
   Let $k\in \Z$, such that $k\ge 1$, $k\not\equiv 0\pmod{4}$ and $\D$ is squarefree. Let $p$ be a prime such that $p\ge 3$, $p\nmid k$
  and $\gcd(p,\D h)=1$,  where $h$ is the class number of $\Q(\sqrt{\D})$.  If $\delta=-1$, then the following conditions are equivalent:
  \begin{enumerate}
    \item \label{Cor:Main1 I1} $p$ is a $k$-Wall-Sun-Sun prime,
    \item \label{Cor:Main1 I2} $\overline{\varepsilon}^{2p^m}-k\overline{\varepsilon}^{p^m}-1 \equiv 0\pmod{p^2}$ for all integers $m\ge 1$,
    \item \label{Cor:Main1 I3} $\overline{\varepsilon}^{2p^m}-k\overline{\varepsilon}^{p^m}-1 \equiv 0\pmod{p^2}$ for some integer $m\ge 1$.
  \end{enumerate}
\end{cor}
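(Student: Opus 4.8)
The plan is to obtain this corollary as an immediate consequence of Lemma \ref{Lem:Main1} via the Galois action on $K=\Q(\sqrt{\D})$. Let $\sigma$ be the nontrivial automorphism of $K/\Q$, so that $\sigma(\sqrt{\D})=-\sqrt{\D}$ and hence $\sigma(\varepsilon)=\overline{\varepsilon}$. First I would note that $\varepsilon$, and therefore $\overline{\varepsilon}$, lies in $\Z_K$ (it is a unit), and that $\sigma$ restricts to a ring automorphism of $\Z_K$ fixing $\Z$ pointwise, so it stabilizes the ideal $p^2\Z_K$. Consequently, for $a,b\in \Z_K$ we have $a\equiv b \pmod{p^2}$ if and only if $\sigma(a)\equiv \sigma(b)\pmod{p^2}$; this is exactly the congruence used in the proof of Lemma \ref{Lem:Main1}, read in $\Z_K$, equivalently in $(\Z/p^2\Z)[\D]$ since $\gcd(p,\D)=1$.

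Next I would apply $\sigma$ to the relation $\varepsilon^{2p^m}-k\varepsilon^{p^m}-1\equiv 0 \pmod{p^2}$. Since $\sigma$ fixes $k$ and $1$, this relation is equivalent to $\overline{\varepsilon}^{2p^m}-k\overline{\varepsilon}^{p^m}-1\equiv 0 \pmod{p^2}$, and conversely. Hence condition \eqref{Cor:Main1 I2} of this corollary is equivalent to condition \eqref{Lem:Main1 I2} of Lemma \ref{Lem:Main1}, and likewise condition \eqref{Cor:Main1 I3} here is equivalent to condition \eqref{Lem:Main1 I3} there. As condition \eqref{Cor:Main1 I1} is verbatim condition \eqref{Lem:Main1 I1}, the three conditions stated here are pairwise equivalent by Lemma \ref{Lem:Main1}, which finishes the argument.

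Alternatively, and just as quickly, one may rerun the proof of Lemma \ref{Lem:Main1} replacing $\varepsilon$ by $\overline{\varepsilon}$ throughout: the only facts about $\varepsilon$ invoked there are that $\varepsilon$ and $\overline{\varepsilon}$ are the two roots of $f(x)=x^2-kx-1$ (symmetric in the pair), that $\ord_m(\overline{\varepsilon})=\pi_k(m)$ for $m\in\{p,p^2\}$ by part \eqref{Per I:1} of Lemma \ref{Lem:Order}, and that $\overline{\varepsilon}^{p+1}+1\equiv 0 \pmod{p}$, which is immediate from $\overline{\varepsilon}=-1/\varepsilon$ together with part \eqref{Per I:2} of Lemma \ref{Lem:Order}. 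Every step of that proof then transcribes without change. There is essentially no obstacle along either route; the one point deserving a word of care is to fix the ambient ring in which the congruences are read so that the symmetry argument is legitimate, which is precisely what the first paragraph arranges.
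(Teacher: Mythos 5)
Your argument is correct and is essentially the paper's own: both reduce the corollary immediately to Lemma \ref{Lem:Main1} by exploiting the conjugacy of $\varepsilon$ and $\overline{\varepsilon}$, the paper via the explicit identity $\overline{\varepsilon}^{2p^m}-k\overline{\varepsilon}^{p^m}-1=-(\varepsilon^{2p^m}-k\varepsilon^{p^m}-1)/\varepsilon^{2p^m}$ coming from $\overline{\varepsilon}=-1/\varepsilon$, and you via applying the nontrivial automorphism of $\Q(\sqrt{\D})$ to the congruence. Either one-line reduction is valid, so there is nothing to fix.
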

\begin{proof}
  Since $\overline{\varepsilon}=-1/\varepsilon$, we have that
  \[\overline{\varepsilon}^{2p^m}-k\overline{\varepsilon}^{p^m}-1=-\dfrac{\varepsilon^{2p^m}-k\varepsilon^{p^m}-1}{\varepsilon^{2p^m}}.\] Thus, the corollary follows from Lemma \ref{Lem:Main1}.
\end{proof}

 \begin{proof}[Proof of Theorem \ref{Thm:Main}]
 For brevity of notation, define
 \[\FF_n(x):=f(x^{s^n})=x^{2s^n}-kx^{s^n}-1.\] Since $\D$ is squarefree and $k\ne 4$, we have that $\FF_n(x)$ is irreducible for all $n\ge 1$ by Lemma \ref{Lem:Irreducible}.
 The case $s=1$ follows from Lemma \ref{Lem:Basic1}. So assume that $s\ge 2$.

  ($\Rightarrow$) Suppose that $s$ has a prime divisor $p$ that is a $k$-Wall-Sun-Sun prime. We claim that $\FF_1(x)$ is not monogenic. Let $\FF_1(\theta)=0$,  $K=\Q(\theta)$ and $\Z_K$ be the ring of integers of $K$. Since $k\not \equiv 0 \pmod{4}$, it follows from item \eqref{I1: p=2} of Lemma \ref{Lem:k not squarefree} that $p\ge 3$.  If $p\mid k$, then $\pi_k(p)=2$, so that $U_2=k\equiv 0 \pmod{p^2}$, since $p$ is a $k$-Wall-Sun-Sun prime. Thus, applying item \eqref{JKS:I2} of Theorem \ref{Thm:JKS} to $\FF_1(x)$, we see that
  \[B_1\equiv 0 \pmod{p} \quad \mbox{and} \quad A_2=k/p \equiv 0 \pmod{p},\] from which we conclude that $\left[\Z_K:\Z[\theta]\right]\equiv 0\pmod{p}$ and $\FF_1(x)$ is not monogenic.

  Suppose next that $p\nmid k$ and $p^m\mid \mid s$, with $m\ge 1$. We apply Theorem \ref{Thm:Dedekind} to $T(x):=\FF_1(x)$ using the prime $p$.
 Let $\tau(x)=x^{2s/p^{m}}-kx^{s/p^{m}}-1$, and suppose that
\[\overline{\tau}(x)=x^{2s/p^{m}}-\overline{k}x^{s/p^{m}}-1=\prod_{i=1}^k\overline{\tau_i}(x)^{e_i},\]
where the $\overline{\tau_i}(x)$ are irreducible.
Then $\overline{T}(x)=\prod_{i=1}^k\overline{\tau_i}(x)^{p^{m}e_i}$. Thus, we can let
\[g(x)=\prod_{i=1}^k\tau_i(x) \quad \mbox{and}\quad h(x)=\prod_{i=1}^k\tau_i(x)^{p^{m}e_i-1},\] where the $\tau_i(x)$ are monic lifts of the $\overline{\tau_i}(x)$. Note also that
\[g(x)h(x)=\prod_{i=1}^k\tau_i(x)^{e_i}=\overline{\tau}(x)+pr(x),\]
for some $r(x)\in \Z[x]$. Then, in Theorem \ref{Thm:Dedekind}, we have that
   \begin{align*}
   F(x)&=\dfrac{g(x)h(x)-T(x)}{p}\\
   &=\dfrac{(\overline{\tau}(x)+pr(x))^{p^{m}}-T(x)}{p}\\
   &=\sum_{j=1}^{p^m-1}\frac{\binom{p^m}{j}}{p}\overline{\tau}(x)^j(pr(x))^{p^m-j}+p^{p^m-1}r(x)^{p^m}+\dfrac{\overline{\tau}(x)^{p^m}-T(x)}{p},
   \end{align*}
   which implies that
   \[\overline{F}(x)=\overline{\left(\dfrac{\overline{\tau}(x)^{p^m}-T(x)}{p}\right)}.\]
 Suppose that $\overline{\tau}(\alpha)=0$. Then
 \[\overline{\tau}(\alpha)^{p^m}=\left(\beta^2-\overline{k}\beta-1\right)^{p^m}=0,\]
 where $\beta=\alpha^{s/p^{m}}$, so that $\alpha^s=\beta^{p^m}$. Since $f(\varepsilon)=f(\overline{\varepsilon})=0$, we assume, without loss of generality, that $\alpha^s=\varepsilon^{p^m}$.
Thus,
\[\overline{F}(\alpha)=-\overline{\left(\frac{T(\alpha)}{p}\right)}=-\overline{\left(\frac{\alpha^{2s}-k\alpha^s-1}{p}\right)}=-\overline{\left(\frac{\varepsilon^{2p^m}-k\varepsilon^{p^m}-1}{p}\right)}=0,\]
since $\varepsilon^{2p^m}-k\varepsilon^{p^m}-1 \equiv 0\pmod{p^2}$ by Lemma \ref{Lem:Main1}. Therefore, by Theorem \ref{Thm:Dedekind}, we conclude that $\left[\Z_K:\Z[\theta]\right]\equiv 0\pmod{p}$ and $\FF_1(x)$ is not monogenic, which completes the proof in this direction.

($\Leftarrow$)
 Assume now that no prime divisor of $s$ is a $k$-Wall-Sun-Sun prime.
 %We have from Lemma \ref{Lem:Irreducible} that $\FF_n(x)$ is irreducible for all integers $n\ge 0$.
   By Lemma \ref{Lem:Basic1}, we see that $\FF_0(x)=f(x)$ is monogenic. %, and by Proposition \ref{Prop:Yokoi}, $\varepsilon=(k+\sqrt{k^2+4})/2$ is the fundamental unit of $\Q(\varepsilon)=\Q(\sqrt{\D})$. %, where $\D$ is as defined in \eqref{Eq:D}.
   Note that $\FF_0(\varepsilon)=0$.
 For $n\ge 0$, define
  \[\varepsilon_n:=\varepsilon^{1/s^n} \quad \mbox{and} \quad K_n:=\Q(\varepsilon_n).\]
   Then $\varepsilon_0=\varepsilon$ and, since $\FF_0(x)$ is monogenic, we have that $\Delta(\FF_0)=\Delta(K_0)$. Additionally, by Lemma \ref{Lem:Irreducible},
 \[\FF_n(\varepsilon_n)=0\quad \mbox{and} \quad [K_{n+1}:K_n]=s\] for all $n\ge 0$.  We assume that $\FF_n(x)$ is monogenic, so that $\Delta(\FF_n)=\Delta(K_n)$, and we proceed by induction on $n$ to show that $\FF_{n+1}(x)$ is monogenic. Let $\Z_{K_{n+1}}$ denote the ring of integers of $K_{n+1}$.  Consequently, by Theorem \ref{Thm:CD}, it follows that
 \begin{equation*}\label{Eq:CD}
 \Delta(\FF_n)^s \mbox{ divides } \Delta(K_{n+1})=\dfrac{\Delta(\FF_{n+1})}{[\Z_{K_{n+1}}:\Z[\varepsilon_{n+1}]]^2},
 \end{equation*}
 which implies that
 \[[\Z_{K_{n+1}}:\Z[\varepsilon_{n+1}]]^2 \mbox{ divides }  \dfrac{\Delta(\FF_{n+1})}{\Delta(\FF_n)^s}.\]
   %\[\Delta(\FF_n)^d[\Z_{K_{n+1}}:\Z[\varepsilon_{n+1}]]^2\mbox{ divides } \Delta(\FF_{n+1}).\]
We see from Theorem \ref{Thm:Swan} that
\[ \abs{\Delta(\FF_n)^s}=s^{2ns^{n+1}}(k^2+4)^{s^{n+1}}\quad \mbox{ and } \quad \abs{\Delta(\FF_{n+1})}=s^{2(n+1)s^{n+1}}(k^2+4)^{s^{n+1}}.\]
 Hence,
 \[\abs{\dfrac{\Delta(\FF_{n+1})}{\Delta(\FF_n)^s}}=s^{2s^{n+1}}.\] Thus, it is enough to show that $\gcd(s,[\Z_{K_{n+1}}:\Z[\varepsilon_{n+1}]])=1$.
  Suppose then that $p$ is a prime divisor of $s$.

  If $p\mid k$, then we can apply item \eqref{JKS:I2} of Theorem  \ref{Thm:JKS} to $\FF_{n+1}(x)$.  Observe that $A_2=k/p$ and $B_1=0$, so that the first condition of item \eqref{JKS:I2} does not hold.  If
 \[A_2\left((-B)^{M_1}A_2^{N_1}-(-B_1)^{N_1}\right)=k^3/p^3 \equiv 0 \pmod{p},\] then $k\equiv 0 \pmod{p^2}$, and $p$ is a $k$-Wall-Sun-Sun prime by Lemma \ref{Lem:k not squarefree}, contradicting the fact that $s$ has no such prime divisors. Hence, the second condition of item \eqref{JKS:I2} holds and therefore, $[\Z_{K_{n+1}}:\Z[\varepsilon_{n+1}]]\not \equiv 0\pmod{p}$ in this case.

Suppose next that $p\nmid k$ and $p^m\mid \mid s$, with $m\ge 1$.
We apply Theorem \ref{Thm:Dedekind} to $T(x):=\FF_{n+1}(x)$ using the prime $p$. We assume that $\overline{\tau}(\alpha)=0$, with  \[\tau(x)=x^{2s^n/p^{mn}}-kx^{s^n/p^{mn}}-1,\] and use the same argument as in the other direction of the proof where we showed that $\FF_1(x)$ is not monogenic when $p\nmid k$. Omitting the details, we arrive at
\[\overline{F}(\alpha)=-\overline{\left(\frac{T(\alpha)}{p}\right)}=-\overline{\left(\frac{\alpha^{2s^n}-k\alpha^{s^n}-1}{p}\right)}
=-\overline{\left(\frac{\varepsilon^{2p^{mn}}-k\varepsilon^{p^{mn}}-1}{p}\right)}.\]
 Therefore, $\overline{F}(\alpha)=0$ if and only if $\varepsilon^{2p^{mn}}-k\varepsilon^{p^{mn}}-1\equiv 0 \pmod{p^2}$, which is true if and only if $p$ is a $k$-Wall-Sun-Sun prime by Lemma \ref{Lem:Main1}. Since $s$ has no prime divisors that are $k$-Wall-Sun-Sun primes, it follows that $\gcd(\overline{F},\overline{g},\overline{h})=1$. Hence, by Theorem \ref{Thm:Dedekind}, we conclude that $[\Z_{K_{n+1}}:\Z[\varepsilon_{n+1}]]\not \equiv 0\pmod{p}$ and $\FF_{n+1}(x)$ is monogenic, which completes the proof of the theorem.
 \end{proof}

%\section*{Acknowledgments}

\section*{Data Availability Statement}
The author confirms that all relevant data are included in the article.

\end{document}